\newtheoremstyle{mystyle_definition}{10pt}{10pt}{}{}{\normalfont\bfseries}{.}{4pt}{\textbf{\thmname{#1}\hspace{3pt}\thmnumber{#2}\hspace{3pt}\thmnote{(#3)}}}
\theoremstyle{plain}
\newtheorem{proposition}{Proposition}
\newtheorem{theorem}{Theorem}
\newtheorem{lemma}{Lemma}
\theoremstyle{mystyle_definition}
\newtheorem{definition}{Definition}
\theoremstyle{remark}
\newtheorem{remark}{Remark}
\begin{document}

\vskip 1.5 true cm

\begin{center}  

{\Large \textbf{The para-HK/QK correspondence}}\\[.5em] 

\vskip 0.5 true cm  

{\large Malte Dyckmanns$^1$ and Owen Vaughan$^{1,2}$} \\

\small

\vskip 0.3 true cm  

$^1${Department of Mathematics and Center for Mathematical Physics\\ 
Universit\"at Hamburg, Bundesstra{\ss}e 55, D-20146 Hamburg, Germany\\  
malte.dyckmanns@math.uni-hamburg.de\\
owen.vaughan@math.uni-hamburg.de}

\vskip 0.3 true cm 

$^2${Department of Mathematics, King's College London\\ 
Strand, London WC2R 2LS, UK\\  
owen.vaughan@kcl.ac.uk}\\

\vskip 0.3 true cm  

 \today

\vskip 0.5 true cm  

\normalsize

\end{center}

\begin{abstract}

\noindent  We generalise the hyper-K\"ahler/quaternionic K\"ahler (HK/QK) correspondence to include para-geometries, 
and present a new concise proof that the target manifold of the HK/QK correspondence is quaternionic K\"ahler.
As an application, we construct one-parameter deformations of the temporal and Euclidean supergravity $c$-map metrics and show that they are para-quaternionic K\"ahler.

\end{abstract}

\section*{Introduction}

In the original HK/QK correspondence developed by Haydys in \cite{Ha} one starts with a hyper-K\"ahler manifold endowed with a \emph{rotating}\footnote{This means that the vector field preserves one of the three complex structures while acting as an infinitesimal rotation on the other two.} Killing vector field and constructs a conical hyper-K\"ahler manifold of four dimensions higher, such that the original manifold can be recovered via the hyper-K\"ahler quotient construction\footnote{With non-zero choice of level set for the homogeneous hyper-K\"ahler moment map.}. 
This conical hyper-K\"ahler manifold is then locally a Swann bundle over a quaternionic K\"ahler manifold which is again of four dimensions lower, i.e.\ of the same dimension as the original hyper-K\"ahler manifold. The construction of this quaternionic K\"ahler manifold from the original hyper-K\"ahler manifold is called the HK/QK correspondence.
This correspondence was generalised in \cite{ACM,ACDM} to include quaternionic K\"ahler manifolds of negative scalar curvature as well as pseudo-Riemannian hyper-K\"ahler and quaternionic K\"ahler manifolds, and has been formulated in terms of the associated twistor spaces in \cite{Hi}.

The main goal of this paper is to generalise the HK/QK correspondence to include para-versions of hyper-K\"ahler and quaternionic K\"ahler geometry. These are defined as follows\footnote{%
An equivalent definition of a para-hyper-K\"ahler manifold of any dimension and a para-quaternionic K\"ahler manifold of dimension larger than $4$ is that the holonomy group is contained in $Sp(2n, {\mathbbm R}) \subset SO(2n,2n)$ and $Sp(2n, {\mathbbm R})  \cdot Sp(2, {\mathbbm R}) \subset SO(2n,2n)$ respectively \cite{AC1}.}.

\begin{definition}
	A \emph{para-hyper-K\"ahler}  manifold $(M,g, J_1, J_2, J_3)$ is a pseudo-Riemannian manifold $(M,g)$ endowed with three skew-symmetric endomorphism fields $J_1, J_2, J_3 \in \Gamma(\text{End} \,TM)$ 
that  satisfy the para-quaternion algebra
	\begin{equation}
		J_1^2 = \epsilon_1 Id_{TM} \;, \qquad
		J_2^2 = \epsilon_2 Id_{TM} \;, \qquad
		J_1^2 = \epsilon_3 Id_{TM} \;, \qquad
		J_1 J_2 = J_3 \;, \label{eq:alg}
	\end{equation}
where $(\epsilon_1,\epsilon_2,\epsilon_3)$ is a permutation of $(-1,1,1)$, and such that the corresponding fundamental two-forms are closed. 
\end{definition}

\begin{definition}
	A \emph{para-quaternionic K\"ahler} manifold $(M,g,Q)$ of dimension $4n > 4$ is a pseudo-Riemannian manifold $(M,g)$ with non-zero scalar curvature endowed with a parallel skew-symmetric rank-three subbundle $Q \subset \text{End}\, TM$ that is locally spanned by three endomorphism fields $J_1, J_2, J_3 \in \Gamma(Q)$ that satisfy the para-quaternion algebra \eqref{eq:alg}.
	\label{eq:PQKdef}
\end{definition}

The curvature tensor of a para-quaternionic K\"ahler manifold of dimension $4n > 4$ admits the decomposition \cite{AC1}
	\begin{equation}
		R = \nu R_0 + W \;,
		\label{eq:RQK}
	\end{equation}
	where $\nu := scal/(4n(n+2))$, $R_0$ is the curvature tensor of para-quaternionic projective space $HP^n$ and $W$ is a trace-free $Q$-invariant algebraic curvature tensor. 
In dimension four we define a para-quaternionic K\"ahler manifold to be a pseudo-Riemannian manifold satisfying Definition \ref{eq:PQKdef} and that admits the decomposition \eqref{eq:RQK} of the curvature tensor.

In this paper we will construct what will be referred to as the \emph{para-hyper-K\"ahler/para-quaternionic K\"ahler (para-HK/QK) correspondence}. This construction maps a para-hyper-K\"ahler manifold of dimension $4n$ to a para-quaternionic K\"ahler manifold of the same dimension. We will see that many of the arguments involved in the HK/QK correspondence can be directly applied to the para-HK/QK correspondence by flipping certain signs. We will therefore present a unified discussion of both correspondences which we will refer to jointly as the $\varepsilon$-HK/QK correspondence, where the parameter $\varepsilon$ distinguishes between the two correspondences according to the rule
\[
	\varepsilon = 
	\begin{cases}
		-1 & \text{HK/QK correspondence} \\
		+1 & \text{Para-HK/QK correspondence} \;.
	\end{cases}
\]
Similarly, we will use the terminology $\varepsilon$-hyper-K\"ahler and $\varepsilon$-quaternionic K\"ahler to refer to a hyper-K\"ahler or quaternionic K\"ahler manifold in the case $\varepsilon = -1$, and a para-hyper-K\"ahler or para-quaternionic K\"ahler manifold in the case $\varepsilon = 1$. 
We will formulate the $\varepsilon$-HK/QK correspondence in terms of a rank-one principal bundle $P$ over the $\varepsilon$-hyper-K\"ahler base manifold $M$, rather than considering a conical $\varepsilon$-hyper-K\"ahler manifold. The $\varepsilon$-quaternionic K\"ahler target manifold $M'$ is then a codimension one submanifold of $P$. 
This is summarised in the following diagram:
\[
\begin{tikzcd}[column sep=6mm, row sep = 5mm]
		& & P_{4n + 1} \arrow{dll}  & & \\
		M_{4n} \arrow[mapsto]{rrrr}[swap]{\text{$\varepsilon$-HK/QK}} & & & & M'_{4n} \arrow[left hook->,swap]{ull}
\end{tikzcd}
\]
Using the rank-one principle bundle $P$ over $M$, we prove that $M'$ inherits an $\varepsilon$-quaternionic K\"ahler structure. This gives, in particular, a new concise proof of the original HK/QK correspondence. Compared to \cite{Ha,ACM,ACDM} this proof is closer to that of \cite{MS1,MS2} where the HK/QK correspondence is incorporated into Swann's twist formalism.

In the para-HK/QK correspondence the rotating Killing vector field on the para-hyper-K\"ahler base manifold preserves either a complex or a para-complex structure.
In this sense the $\varepsilon$-HK/QK correspondence can be split into three distinct subcases:
\begin{enumerate}[(i)]
	\item The HK/QK correspondence induced by a holomorphic vector field.
	\item The para-HK/QK correspondence induced by a holomorphic vector field.
	\item The para-HK/QK correspondence induced by a para-holomorphic vector field.
\end{enumerate}
We will show that in cases (i) and (ii) the $\varepsilon$-quaternionic K\"ahler target manifold admits an integrable complex structure, whilst in case (iii) the para-quaternionic K\"ahler target manifold admits an integrable para-complex structure. In all cases the integrable structure is induced by the structure on the base manifold that is preserved by the rotating Killing vector field, and is compatible with the $\varepsilon$-quaternionic structure.

Para-quaternionic K\"ahler manifolds have recently appeared in the physics literature in the context of the \emph{local temporal} (supergravity) \emph{$c$-map} and  the \emph{local Euclidean} (supergravity) \emph{$c$-map} \cite{Cortes:2015wca}.
The local temporal $c$-map is a map from a projective special K\"ahler manifold of dimension $2n$ to a para-quaternionic K\"ahler manifold of dimension $4n + 4$ that is induced by the dimensional reduction of 4D, ${\cal N} = 2$ Minkowskian local vector-multiplets over a timelike circle. 
Similarly, the local Euclidean $c$-map is a map from a projective special para-K\"ahler manifold to a para-quaternionic K\"ahler manifold (with the same dimensions as above) that is induced by the dimensional reduction of 4D, ${\cal N} = 2$ Euclidean local vector-multiplets over a spacelike circle. 
We will see that both maps can be understood geometrically in terms of the para-HK/QK correspondence, with the local temporal $c$-map corresponding to case (ii) and the local Euclidean $c$-map corresponding to case (iii). 
This provides an alternative proof that the target manifolds in both cases are para-quaternionic K\"ahler.
Moreover, we will use the para-HK/QK correspondence to construct one-parameter deformations of the local temporal and Euclidean $c$-map metrics, which are para-quaternionic K\"ahler by construction. This is analogous to the proof given in \cite{ACDM} that the one-loop deformed local spatial $c$-map metric, which first appeared in the physics literature in \cite{RoblesLlana:2006ez}, is quaternionic K\"ahler\footnote{The fact that the rigid $c$-map metric can be obtained from the one-loop deformed local spatial $c$-map metric via the QK/HK correspondence was previously shown in \cite{APP}.}. To our knowledge the deformations of the local temporal and Euclidean $c$-map metrics that we present here have not previously appeared in the literature.

\subsection*{Acknowledgements}

We would like to thank Vicente Cort\'es for suggesting the topic of this paper and for useful discussions.
The work of M.D.\ was supported by the RTG 1670 ``Mathematics inspired by String Theory,'' funded by the Deutsche Forschungsgemeinschaft (DFG).
The work of O.V.\ was supported by the German Science Foundation (DFG) under the Collaborative Research Center
(SFB) 676 ``Particles, Strings and the Early Universe.''

\section{The $\varepsilon$-HK/QK correspondence}\label{secPHKQK}

Let $(M,g,J_1,J_2,J_3)$ be an $\varepsilon$-hyper-K\"ahler manifold with $\varepsilon$-hyper-complex structure satisfying \eqref{eq:alg} with $(\epsilon_1,\epsilon_2,\epsilon_3)$ a permutation of $(-1,\varepsilon,\varepsilon)$. 
We will use the following convention for the definition of the $\epsilon_\alpha$-K\"ahler forms\footnote{Note that this convention differs from the convention in \cite{Cortes:2015wca} by a minus sign.}:
\begin{equation}
	\omega_\alpha := -\epsilon_\alpha g(J_\alpha \cdot, \cdot) \;, \qquad \alpha = 1,2,3 \;.
\end{equation}
Notice that 
\begin{equation}
	\epsilon_3 = -\epsilon_1 \epsilon_2 \;.
\end{equation}
This means that $J_1, J_2, J_3$ are complex or para-complex according to the rule
	\begin{center}
		\begin{tabular}{|c|c|c|c|}
			\hline
			 & $(\epsilon_1, \epsilon_2)$ & complex & para-complex \\
			\hhline{|-|-|-|-|}
			(i) & $(-1,-1)$ & $J_1, J_2, J_3$ &  \\
			(ii) & $(-1,+1)$ & $J_1$ & $J_2, J_3$  \\
			(iii) & $(+1,-1)$ & $J_2$ & $J_1, J_3$ \\
			(iv) & $(+1, +1)$ & $J_3$ & $J_1, J_2$  \\
			\hline 
		\end{tabular}
	\end{center}
and fulfil
\begin{equation} J_\alpha J_\beta=-J_\beta J_\alpha=\epsilon_3\epsilon_\gamma J_\gamma \end{equation}
for any cyclic permutation $(\alpha,\beta,\gamma)$ of $(1,2,3)$. 
The ordering of the endomorphisms in the above table will be important later. In particular, the $\epsilon_1$-complex structure $J_1$ induces an integrable $\epsilon_1$-complex structure on the $\varepsilon$-quaternionic K\"ahler target manifold. This means that for the para-HK/QK correspondence there are two possibilities: either the para-quaternionic K\"ahler target manifold admits an integrable complex structure, which corresponds to case (ii), or an integrable para-complex structure, which corresponds to cases (iii) and (iv). In the discussion that follows there will be no distinction between cases (iii) and (iv), which are equivalent up to relabelling of $J_2$ and $J_3$. The three distinct cases (i), (ii) and (iii) correspond to the three subcases of the $\varepsilon$-HK/QK correspondence described in the Introduction.

In order to perform the $\varepsilon$-HK/QK correspondence there must exist a real-valued function $f \in { C}^\infty(M)$ such that the vector field
\begin{equation}
\qquad\qquad\qquad\qquad\;\;\,\qquad\qquad Z := -\omega_1^{-1}(df)\qquad\qquad (\text{i.e.\ }\omega_1(Z,\cdot)=-df)
\end{equation}
is timelike or spacelike, Killing, $J_1$-holomorphic (that is, ${\cal L}_Z J_1 = 0$), and satisfies 
\begin{equation}
	{\cal L}_Z J_2 = \epsilon_1 2J_3 \;.
\end{equation}
Such a vector field is called a \emph{rotating} vector field.
We define 
\begin{equation}
	f_1 := f - \frac{g(Z,Z)}{2} \;,
	\qquad
	\beta := g(Z,\cdot) \;,
	\qquad
	\sigma := \text{sign} f \;, \qquad \sigma_1 := \text{sign} f_1 \;,
\end{equation}
and assume that $\sigma$ and $\sigma_1$ are constant and non-vanishing. 

Up to a minus sign, the function $f$ is an $\epsilon_1$-K\"ahler moment map of $Z$ with respect to $\omega_1$.
Note the simple but important fact that such a moment map is only defined up to a shift by a real constant. This will lead to a one-parameter deformation of the resulting $\varepsilon$-quaternionic K\"ahler metric.

Let $\pi:P \to M$ be a rank-one principal bundle over $M$, and let $\eta\in\Omega^1(P)$ be a principal connection on $P$ with curvature
\begin{equation}
	d\eta = \pi^*\left(\omega_1 - \frac12 d\beta\right)\;.
\end{equation}
Let $\tilde{Y}\in \Gamma(\mathrm{ker}\,\eta)$  denote the horizontal lift to $P$ of any vector field $Y$ on $M$, and let $X_P$ denote the fundamental vector field of the principal action on $P$ normalised such that $\eta(X_P) = 1$.
On $P$ we define the metric
\begin{equation}
	g_P := \frac{2}{f_1} \eta^2 + \pi^* g \;,
	\label{eq:gp}
\end{equation}
the vector field
\begin{equation}
	Z_1^P := \tilde{Z} + f_1 X_P \;,
\end{equation}
and one-forms 
\begin{align}\nonumber
	\theta_0^P &:= \frac12 df \;, \\\nonumber
	\theta_1^P &:= \eta + \frac12 \beta   \;, \\\nonumber
	\theta_2^P &:= - \frac{\epsilon_2}2 \pi^* \omega_3(Z,\cdot) \;, \\
	\theta_3^P &:=  \frac{\epsilon_2}2 \pi^*  \omega_2(Z, \cdot)\;.\label{eqDefThetaP}
\end{align}
Here we do not explicitly write the pull-back symbol in front of the functions $f, f_1, \beta$. 

In the following, we will prove in particular that
\begin{align}	
		g' &:= \left. \frac{1}{2|f|} \left( g_P - \frac{2}{f} \left( (\theta_1^P)^2 - \epsilon_1 (\theta_0^P)^2 - \epsilon_2 (\theta_3^P)^2 - \epsilon_3 (\theta_2^P)^2 \right) \right) \right|_{M'}\nonumber\\
		&\,= \frac{1}{2|f|}\left. \left(g_P-\frac{2\epsilon_1}{f}\sum_{a=0}^3 \epsilon_a(\theta_a^P)^2\right)\right|_{M'}\qquad\qquad\qquad (\epsilon_0:=-1)\label{eqDefQKMetricHKQK}
	\end{align}
defines an $\varepsilon$-quaternionic K\"ahler metric on any codimension one submanifold $M'\subset P$ that is transversal to $Z_1^P$.
	
For the proof of the above statement, we will now gather some relevant geometric properties of the original $\varepsilon$-hyper-K\"ahler manifold $M$ that are implied by the existence of the rotating Killing vector field $Z$.

Using $Z$ one may define a rank-four `vertical' distribution and its `horizontal' orthogonal complement in $TM$:
\[
	{\cal D}^v := \text{span}\{ Z, J_1 Z, J_2 Z, J_3 Z \} \;,
	\qquad
	{\cal D}^h := ({\cal D}^v)^{\perp_g}\subset TM \;,
\]
in which case the tangent bundle decomposes as 
\[
	TM = {\cal D}^v \oplus^{\perp_g} {\cal D}^h \;.
\]
With respect to the frame $(Z,J_1Z,J_2Z,J_3Z)$ on ${\cal D}^v$, the endomorphisms $J_1, J_2, J_3$ are represented respectively by the matrices
\begin{equation}
	\left( \begin{array}{cccc}
		0 & \epsilon_1 & 0 & 0 \\
		1 & 0 & 0 & 0 \\
		0 & 0 & 0 & \epsilon_1 \\
		0 & 0 & 1 & 0 
	\end{array} \right) \;, 
	\quad
	\left( \begin{array}{cccc}
		0 & 0 & \epsilon_2 & 0 \\
		0 & 0 & 0 & -\epsilon_2 \\
		1 & 0 & 0 & 0 \\
		0 & -1 & 0 & 0 
	\end{array} \right) \;, 
	\quad
	\left( \begin{array}{cccc}
		0 & 0 & 0 & -\epsilon_1 \epsilon_2 \\
		0 & 0 & \epsilon_2 & 0 \\
		0 & -\epsilon_1  & 0 & 0 \\
		1 & 0 & 0 & 0 
	\end{array} \right) \;.
	\label{eq:matrices}
\end{equation}
Let us define the following one-forms on $M$:
\begin{align*}
	\theta_0 &:= \frac12 df = -\frac12 \omega_1(Z,\cdot) = \frac{\epsilon_1}2 g(J_1Z, \cdot)  
	& &= \frac{\epsilon_1}{2} g(W, \cdot) \;, \\
	\theta_1 &:= \frac12 \beta = \frac12 g(Z,\cdot)  
	& &= \frac{\epsilon_1}{2} g(J_1 W, \cdot)  \;, \\
	\theta_2 &:= -\frac{\epsilon_2}2 \omega_3(Z,\cdot) = -\frac{\epsilon_1}2 g(J_3Z, \cdot)  
	& &= \frac{\epsilon_1}{2} g(J_2 W, \cdot)  \;, \\
	\theta_3 &:=\frac{ \epsilon_2}2\omega_2(Z, \cdot) = -\frac12 g(J_2Z, \cdot)  
	& &= \frac{\epsilon_1}{2} g(J_3 W, \cdot)  \;,
\end{align*}
where $W := J_1 Z$. Notice that the one-forms $(\theta_a^P)_{a=0,\ldots,3}$ defined in Eq.\ \eqref{eqDefThetaP} are precisely the pull-backs of the one-forms $(\theta_a)$ defined above with the exception of $\theta_1^P$ which has an additional $\eta$ inserted.

\begin{proposition}
	The $\varepsilon$-hyper-K\"ahler metric can be written as
	\begin{equation}
		g = \frac{4}{\beta(Z)} \left( (\theta_1)^2 - \epsilon_1 (\theta_0)^2 - \epsilon_2 (\theta_3)^2 - \epsilon_3 (\theta_2)^2\right)  + \check{g} \;,
		\label{eq:gdecomp}
	\end{equation}
	where $\check{g}$ is a symmetric rank-two tensor field that is invariant under $Z$ and has four-dimensional kernel ${\cal D}^v$.
	The $\varepsilon$-K\"ahler forms are given by
	\begin{equation}
			\omega_\alpha := -\epsilon_\alpha  g(J_\alpha  \cdot, \cdot) 
			= \frac{4}{\beta(Z)} \left( \epsilon_1 \epsilon_\alpha \theta_0 \wedge \theta_\alpha - \epsilon_2 \theta_\beta \wedge \theta_\gamma \right) + \check{\omega}_\alpha \;,
	\label{eq:omegaalpha}
	\end{equation}
		where $(\alpha,\beta,\gamma)$ is a cyclic permutation of $(1,2,3)$, and we have defined
	$\check{\omega}_\alpha := -\epsilon_\alpha \check{g}(J_\alpha \cdot, \cdot)$.
	The two-forms $(\check{\omega}_\alpha)_{\alpha = 1,2,3}$ are degenerate but not invariant under $Z$.
\end{proposition}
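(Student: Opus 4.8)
The plan is to reduce everything to linear algebra on the rank-four vertical distribution $\mathcal{D}^v = \mathrm{span}\{Z, J_1Z, J_2Z, J_3Z\}$ by exploiting the $g$-orthogonal splitting $TM = \mathcal{D}^v \oplus^{\perp_g} \mathcal{D}^h$. First I would record the Gram matrix of the frame $(Z, J_1Z, J_2Z, J_3Z)$: skew-symmetry of the $J_\alpha$ gives $g(Z, J_\alpha Z) = 0$, and combining skew-symmetry with the algebra \eqref{eq:alg} gives $g(J_\alpha Z, J_\beta Z) = -g(Z, J_\alpha J_\beta Z) = 0$ for $\alpha \neq \beta$ and $g(J_\alpha Z, J_\alpha Z) = -\epsilon_\alpha g(Z,Z)$. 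Writing $b := g(Z,Z) = \beta(Z)$, which is nowhere zero since $Z$ is spacelike or timelike (and is moreover $Z$-constant since $Z$ is Killing), this shows $\mathcal{D}^v$ is a non-degenerate rank-four subbundle with diagonal Gram matrix $(b, -\epsilon_1 b, -\epsilon_2 b, -\epsilon_3 b)$, so that $\mathcal{D}^h := (\mathcal{D}^v)^{\perp_g}$ is a genuine smooth complement. The second preliminary is the observation that each $\theta_a$ is of the form $\tfrac{\epsilon_1}{2} g(J_\bullet W, \cdot)$ with $W = J_1Z$ and $J_\bullet \in \{\mathrm{Id}, J_1, J_2, J_3\}$, so $J_\bullet W \in \mathcal{D}^v$ and hence $\theta_a$ annihilates $\mathcal{D}^h$; consequently the tensor $T := \tfrac{4}{\beta(Z)}\big((\theta_1)^2 - \epsilon_1(\theta_0)^2 - \epsilon_2(\theta_3)^2 - \epsilon_3(\theta_2)^2\big)$ vanishes as soon as one argument lies in $\mathcal{D}^h$, and can therefore differ from $g$ only on $\mathcal{D}^v \times \mathcal{D}^v$.

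The one real computation is then on $\mathcal{D}^v$: evaluating the $\theta_a$ on the frame shows that $\theta_0, \theta_1, \theta_2, \theta_3$ are, up to the scalars $-\tfrac{b}{2}, \tfrac{b}{2}, -\tfrac{\epsilon_2 b}{2}, \tfrac{\epsilon_2 b}{2}$, the members of the coframe of $(\mathcal{D}^v)^\ast$ dual to $J_1Z, Z, J_3Z, J_2Z$ respectively. Substituting the corresponding expressions for that dual coframe into the expansion of $g|_{\mathcal{D}^v}$ dictated by the Gram matrix, and using $\epsilon_\alpha^2 = 1$, reproduces exactly $T|_{\mathcal{D}^v}$; together with the previous paragraph this gives \eqref{eq:gdecomp} with $\check g := g - T$. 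By construction $\check g$ kills $\mathcal{D}^v$ and restricts to $g$ on $\mathcal{D}^h$, so its kernel is precisely the rank-four distribution $\mathcal{D}^v$. For the $Z$-invariance of $\check g$ I would argue that the flow of $Z$ is isometric and preserves $\mathcal{D}^v$ --- indeed $\mathcal{L}_Z Z = 0$, $\mathcal{L}_Z(J_1Z) = 0$, $\mathcal{L}_Z(J_2Z) = 2\epsilon_1 J_3 Z$ and $\mathcal{L}_Z(J_3Z) = 2 J_2 Z$ using $\mathcal{L}_Z J_1 = 0$, $\mathcal{L}_Z J_2 = 2\epsilon_1 J_3$ and \eqref{eq:alg}, all of which lie in $\mathcal{D}^v$ --- so it also preserves $\mathcal{D}^h$ and the orthogonal block form of $g$, whence $\mathcal{L}_Z \check g = 0$.

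For the $\varepsilon$-K\"ahler forms I would first check that every $J_\alpha$ preserves $\mathcal{D}^v$ (immediate from \eqref{eq:alg}) and $\mathcal{D}^h$ (since for $Y \in \mathcal{D}^h$ and any frame vector $v$ of $\mathcal{D}^v$ one has $g(J_\alpha Y, v) = -g(Y, J_\alpha v) = 0$, so $J_\alpha Y \in \mathcal{D}^h$). Thus $\omega_\alpha = -\epsilon_\alpha g(J_\alpha\cdot,\cdot)$ is block-diagonal for $TM = \mathcal{D}^v \oplus \mathcal{D}^h$ and $\check\omega_\alpha := -\epsilon_\alpha \check g(J_\alpha\cdot,\cdot)$ is exactly its $\mathcal{D}^h$-block, so $\omega_\alpha = \omega_\alpha|_{\mathcal{D}^v} + \check\omega_\alpha$. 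Computing $\omega_\alpha|_{\mathcal{D}^v}$ from the explicit matrices \eqref{eq:matrices} together with the Gram matrix, then rewriting the answer via the dual-coframe identities above, yields \eqref{eq:omegaalpha}. Finally $\check\omega_\alpha$ is degenerate because $\mathcal{D}^v$ lies in its radical, and $\check\omega_2, \check\omega_3$ are not $Z$-invariant since $J_2, J_3$ are not while $\check g$ is (one computes $\mathcal{L}_Z\check\omega_2$ and $\mathcal{L}_Z\check\omega_3$ to be nonzero multiples of $\check\omega_3$ and $\check\omega_2$ respectively). I expect the only real obstacle to be keeping the signs straight in the two explicit computations on $\mathcal{D}^v$; once the Gram matrix and the dual-coframe identities are in place, both reductions are entirely mechanical.
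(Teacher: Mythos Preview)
Your proposal is correct and follows essentially the same strategy as the paper: establish the Gram matrix of $(Z,J_1Z,J_2Z,J_3Z)$, use it to verify that $\check g:=g-T$ has kernel $\mathcal D^v$, and then treat the K\"ahler forms block by block with respect to $TM=\mathcal D^v\oplus\mathcal D^h$. The only stylistic differences are that the paper proves $\mathcal L_Z\check g=0$ by computing $\mathcal L_Z\theta_a$ directly (finding $\mathcal L_Z\theta_2=2\epsilon_1\theta_3$, $\mathcal L_Z\theta_3=2\theta_2$) rather than by your argument that the isometric flow preserves the splitting, and that for \eqref{eq:omegaalpha} it computes the pullbacks $J_\alpha^*\theta_a$ instead of reading $\omega_\alpha|_{\mathcal D^v}$ off the matrices \eqref{eq:matrices}; both routes are mechanically equivalent.
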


\begin{proof}
	Since $g$ is $\varepsilon$-hyper-Hermitian $Z, J_1Z, J_2Z,J_3Z$ are pairwise orthogonal. They have squared norm $g(Z,Z) = \beta(Z), -\epsilon_1 \beta(Z), - \epsilon_2 \beta(Z),- \epsilon_3 \beta(Z)$ respectively.
The tensor field
	\begin{align*}
		\check{g} &= g - \frac{4}{\beta(Z)} \left( (\theta_1)^2  -\epsilon_1 (\theta_0)^2 - \epsilon_2 (\theta_3)^2 - \epsilon_3 (\theta_2)^2\right) \\
		&= g - \frac{1}{g(Z,Z)} \left( (Z^\flat)^2  - \epsilon_1 (J_1Z^\flat)^2 - \epsilon_2 (J_2Z^\flat)^2 - \epsilon_3 (J_3 Z^\flat)^2\right) \;,
	\end{align*}
where $Z^\flat = g(Z, \cdot)$ and $J_\alpha Z^\flat = g(J_\alpha Z, \cdot)$,	has kernel ${\cal D}^v$. Since $Z$ is Killing we have  ${\cal L}_Z \beta = 0 = {\cal L}_Z\theta_1$, and since $Z$ is $J_1$-holomorphic we have ${\cal L}_Z \theta_0 = 0$. In addition we have ${\cal L}_Z J_2 = \epsilon_1 2 J_3$ and ${\cal L}_Z J_3 = 2 J_2$ and therefore 
	\[
		{\cal L}_Z \theta_2 = \epsilon_1 2 \theta_3 \;,
		\qquad
		{\cal L}_Z \theta_3 = 2\theta_2 \;.
	\]
	From these expressions it easily follows that ${\cal L}_Z \check{g} = 0 \;.$
	
	Next, we calculate 
	\[
		J^*_\alpha \theta_0 = -\theta_\alpha\;,
		\qquad
		J_\alpha^\ast\theta_\beta=-\epsilon_3\epsilon_\gamma \theta_\gamma\;,
		\qquad
		J_\alpha^\ast\theta_\alpha=-\epsilon_\alpha \theta_0 \;,
		\qquad
		J^*_\alpha \theta_\gamma = \epsilon_3\epsilon_\beta \theta_\beta \;
	\]
	for any cyclic permutation $(\alpha,\beta,\gamma)$ of $(1,2,3)$.
	Using the fact that $\epsilon_\alpha\epsilon_\beta=-\epsilon_\gamma$ the expressions for the $\varepsilon$-K\"ahler forms  are given by
	\begin{align*}
		\omega_\alpha &
		=- \frac{4\epsilon_1\epsilon_\alpha}{\beta(Z)} \left( -\theta_0 \wedge \theta_\alpha - \epsilon_3 \epsilon_\alpha \theta_\beta \wedge \theta_\gamma \right) + \check{\omega}_\alpha \;,
	\end{align*}
	which can be written as \eqref{eq:omegaalpha}.
\end{proof}

~

Let us now turn our attention to the rank-one principal bundle $\pi: P \to M$.
\begin{lemma}
	The exterior derivatives of the one-forms $\left(\theta^P_\alpha\right)_{\alpha = 1,2,3}$ defined on $P$ in Eq.\ \eqref{eqDefThetaP} are given by
	\[
		d\theta^P_\alpha = \epsilon_1 \epsilon_\alpha \pi^* \omega_\alpha \;, \qquad \alpha=  1,2,3 \;.
	\]
	\label{lem:theta}
\end{lemma}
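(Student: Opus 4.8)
The plan is to treat $\alpha=1$ separately from $\alpha=2,3$, since the first follows immediately from the curvature identity for the connection, while the other two need a short Lie-derivative computation on the base. For $\alpha=1$ I would differentiate $\theta_1^P=\eta+\tfrac12\beta$ and substitute $d\eta=\pi^*\!\left(\omega_1-\tfrac12 d\beta\right)$, obtaining $d\theta_1^P=\pi^*\omega_1$; since $\epsilon_1^2=1$ this is exactly $\epsilon_1\epsilon_1\,\pi^*\omega_1$.

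For $\alpha=2,3$ note that $\theta_2^P$ and $\theta_3^P$ are pull-backs along $\pi$ of the one-forms $-\tfrac{\epsilon_2}{2}\,\omega_3(Z,\cdot)$ and $\tfrac{\epsilon_2}{2}\,\omega_2(Z,\cdot)$ on $M$, and $d$ commutes with $\pi^*$, so it suffices to compute $d\big(\omega_\alpha(Z,\cdot)\big)$ on $M$ for $\alpha=2,3$. By Cartan's formula $d\big(\omega_\alpha(Z,\cdot)\big)={\cal L}_Z\omega_\alpha-\omega_\alpha(Z,\cdot)\circ$ (more precisely $d\,\iota_Z\omega_\alpha={\cal L}_Z\omega_\alpha-\iota_Z d\omega_\alpha$), and since $M$ is $\varepsilon$-hyper-K\"ahler the $\omega_\alpha$ are closed, hence $d\,\iota_Z\omega_\alpha={\cal L}_Z\omega_\alpha$. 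To evaluate ${\cal L}_Z\omega_\alpha$ I would use that $Z$ is Killing (so ${\cal L}_Z$ passes through $g$) together with the rotation identities ${\cal L}_Z J_2=2\epsilon_1 J_3$ and ${\cal L}_Z J_3=2J_2$ already recorded in the proof of the Proposition: then ${\cal L}_Z\omega_2=-\epsilon_2\,g\!\left(({\cal L}_Z J_2)\cdot,\cdot\right)=-2\epsilon_1\epsilon_2\,g(J_3\cdot,\cdot)$ and ${\cal L}_Z\omega_3=-\epsilon_3\,g\!\left(({\cal L}_Z J_3)\cdot,\cdot\right)=-2\epsilon_3\,g(J_2\cdot,\cdot)$. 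Re-expressing $g(J_\alpha\cdot,\cdot)=-\epsilon_\alpha\omega_\alpha$ and simplifying the signs with $\epsilon_3=-\epsilon_1\epsilon_2$ yields ${\cal L}_Z\omega_2=-2\omega_3$ and ${\cal L}_Z\omega_3=-2\epsilon_1\omega_2$. Substituting back gives $d\theta_2^P=-\tfrac{\epsilon_2}{2}\,\pi^*{\cal L}_Z\omega_3=\epsilon_1\epsilon_2\,\pi^*\omega_2$ and $d\theta_3^P=\tfrac{\epsilon_2}{2}\,\pi^*{\cal L}_Z\omega_2=-\epsilon_2\,\pi^*\omega_3$, and using $\epsilon_1\epsilon_3=-\epsilon_2$ the latter is $\epsilon_1\epsilon_3\,\pi^*\omega_3$, as claimed.

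There is no real conceptual obstacle: the lemma is a clean consequence of Cartan's formula together with closedness of the $\varepsilon$-K\"ahler forms and the rotation property of $Z$. The only thing to watch is the sign bookkeeping, where the single algebraic relation $\epsilon_3=-\epsilon_1\epsilon_2$ (equivalently $\epsilon_\alpha\epsilon_\beta=-\epsilon_\gamma$) does all the work, and the fact that ${\cal L}_Z$ legitimately commutes both with $g$ (because $Z$ is Killing) and with $\pi^*$ (because for $\alpha=2,3$ the $\theta_\alpha^P$ are genuine pull-backs of forms on $M$, while $\theta_1^P$ is handled directly from the connection). One may also note in passing that, since ${\cal L}_Z$ and $\iota_Z$ commute, the identities ${\cal L}_Z\omega_2=-2\omega_3$, ${\cal L}_Z\omega_3=-2\epsilon_1\omega_2$ are consistent with the relations ${\cal L}_Z\theta_2=2\epsilon_1\theta_3$, ${\cal L}_Z\theta_3=2\theta_2$ established in the Proposition.
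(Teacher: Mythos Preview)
Your proof is correct and follows essentially the same route as the paper: handling $\alpha=1$ directly from the curvature of $\eta$, and for $\alpha=2,3$ using Cartan's formula with $d\omega_\alpha=0$ to reduce to the Lie-derivative identities ${\cal L}_Z\omega_2=-2\omega_3$, ${\cal L}_Z\omega_3=-2\epsilon_1\omega_2$. The only difference is that you spell out the derivation of these Lie-derivative identities from the rotation rules for $J_2,J_3$ and the Killing property, whereas the paper simply quotes them; this is a minor expository expansion, not a different argument.
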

\begin{proof}
	From the curvature of $\eta$ we see immediately that
	\[
		d\theta_1^P = d\eta + \frac12 d\beta = \pi^* \omega_1 = \epsilon_1 \epsilon_1 \pi^* \omega_1 \;.
	\]
	Next, note that
	\[
		{\cal L}_Z \omega_3 = -\epsilon_1 2 \omega_2 \;,
		\qquad
		{\cal L}_Z \omega_2 = -2  \omega_3 \;,
	\]
	from which it follows that 
	\begin{align*}
		d\theta_2^P &= - \frac{\epsilon_2}2 d(\iota_Z \omega_3) = - \frac{\epsilon_2}2 {\cal L}_Z \omega_3 = \epsilon_1 \epsilon_2 \omega_2  \;, \\
		d\theta_3^P &=  \frac{\epsilon_2}2 d(\iota_Z \omega_2) = \frac{\epsilon_2}2 {\cal L}_Z \omega_2 = - \epsilon_2 \omega_3 = \epsilon_1 \epsilon_3 \omega_3 \;.
	\end{align*}
\end{proof}
Let $M'$ be a codimension one submanifold of $P$ transversal to $Z_1^P$,  that is $TP|_{M'}=TM'\oplus\mathbb{R} Z_1^P$. Let $pr_{TM'}$ denote the projection onto the first factor, i.e.\ the projection from $TP$ to $TM'$ along $Z_1^P$. On $M'$ we define
\[
	X := pr_{TM'} \circ X_P|_{M'} \;,
\]
and for any vector field $Y$ on $M$ we define the vector field on $M'$
\[
	Y' := pr_{TM'} \circ \tilde{Y}|_{TM'} \;.
\]
Since $pr_{TM'} Z_1^P = 0$ we have
\begin{equation}
	X = pr_{TM'} \circ X_P|_{M'}  = -\frac{1}{f_1} pr_{TM'} \circ \tilde{Z} |_{M'} =  -\frac{1}{f_1} Z' \;.
\end{equation}
We define the distributions 
\[
	{\cal D}'{}^{h} := \text{span}\{ Y' \;\; | \;\; Y \in \Gamma({\cal D}^h) \} \subset TM' \;,
\]
and
\[
	{\cal D}'{}^v := \text{span} \{ X, (J_1 Z)', (J_2 Z)', (J_3 Z)' \}
	= \text{span} \{ Z', (J_1 Z)', (J_2 Z)', (J_3 Z)' \} \subset TM' \;.
\]
We can then decompose the tangent space of $M'$ as
\[
	TM' = {\cal D}'{}^v \oplus {\cal D}'{}^h \;.
\]

\begin{proposition}
	An almost $\varepsilon$-hyper-complex structure  $(J_1', J_2', J_3')$ on $M'$ such that $J_\alpha'^2=\epsilon_\alpha Id_{TM'}$ and $J_1J_2=J_3$ is uniquely defined by
	\[
		J'_\alpha X = -\frac{1}{f_1} (J_\alpha Z)' \;,
		\qquad
		J_\alpha'(J_\beta Z)' = (J_\gamma Z)' \;,
	\]
	and
	\[
		J'_\alpha(Y') = (J_\alpha Y)' \qquad \text{for all} \;\;\; Y' \in \Gamma({\cal D}'{}^h) \;.	
	\]
\end{proposition}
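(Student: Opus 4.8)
The plan is to construct $J_1',J_2',J_3'$ separately on the two summands of $TM'={\cal D}'{}^v\oplus{\cal D}'{}^h$ by transporting the endomorphisms $J_\alpha$ through the natural linear maps relating $TM$ and $TM'$, and then to read off the displayed formulas and uniqueness from this construction. First I would verify that these transport maps are isomorphisms. For $Y\in\Gamma({\cal D}^h)$ the assignment $Y\mapsto Y'=pr_{TM'}(\tilde Y|_{M'})$ is fibrewise injective: $\tilde Y$ lies in $\ker\eta$, whereas $Z_1^P=\tilde Z+f_1X_P$ has $\eta(Z_1^P)=f_1\neq 0$ (since $\sigma_1$ is non-vanishing), so the line $\mathbb{R}Z_1^P=\ker(pr_{TM'})$ meets $\ker\eta$ only in $0$; hence $\iota_h\colon{\cal D}^h\to{\cal D}'{}^h$, $Y\mapsto Y'$, is an isomorphism. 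By the same reasoning, if $a_0\tilde Z+\sum_\alpha a_\alpha\widetilde{J_\alpha Z}$ projects to $0$ in $TM'$ it must be a multiple of $Z_1^P$; applying $\eta$ kills that multiple, and then injectivity of the horizontal lift together with the fact that $\{Z,J_1Z,J_2Z,J_3Z\}$ is a frame of ${\cal D}^v$ (which uses $g(Z,Z)\neq 0$) forces all $a_i=0$. So $\{Z',(J_1Z)',(J_2Z)',(J_3Z)'\}$ is a frame of ${\cal D}'{}^v$ and $\iota_v\colon{\cal D}^v\to{\cal D}'{}^v$, $Z\mapsto Z'$, $J_\alpha Z\mapsto(J_\alpha Z)'$, is an isomorphism; this is compatible with $TM'={\cal D}'{}^v\oplus{\cal D}'{}^h$ by the count $4+(4n-4)=\dim M'$.

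Next I would set up the structure itself. Since ${\cal D}^v$ is $J_\alpha$-invariant, as \eqref{eq:matrices} shows, is non-degenerate ($Z$ being timelike or spacelike, so $g(Z,Z)\neq 0$), and $J_\alpha$ is $g$-skew, the orthogonal complement ${\cal D}^h$ is $J_\alpha$-invariant as well, so one may define $J_\alpha':=\iota_v\circ J_\alpha\circ\iota_v^{-1}$ on ${\cal D}'{}^v$ and $J_\alpha':=\iota_h\circ J_\alpha\circ\iota_h^{-1}$ on ${\cal D}'{}^h$. Because $\iota_v$ and $\iota_h$ are linear isomorphisms intertwining $J_\alpha$ with $J_\alpha'$ on each summand and the $J_\alpha$ obey \eqref{eq:alg}, the $J_\alpha'$ obey $J_\alpha'^2=\epsilon_\alpha Id_{TM'}$ and $J_1'J_2'=J_3'$ on each summand, hence on $TM'$; thus $(J_1',J_2',J_3')$ is an almost $\varepsilon$-hyper-complex structure. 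Unwinding the definitions then gives exactly the displayed identities: $J_\alpha'(Y')=\iota_h(J_\alpha\iota_h^{-1}Y')=(J_\alpha Y)'$ for $Y'\in\Gamma({\cal D}'{}^h)$, $J_\alpha'X=-\frac1{f_1}J_\alpha'Z'=-\frac1{f_1}(J_\alpha Z)'$ using $X=-\frac1{f_1}Z'$, and $J_\alpha'(J_\beta Z)'=(J_\alpha J_\beta Z)'$, which reproduces $J_\alpha'(J_\beta Z)'=(J_\gamma Z)'$ by the product rules for $J_1,J_2,J_3$ recorded just after \eqref{eq:alg}.

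For uniqueness I would argue as follows. Let $(\tilde J_1,\tilde J_2,\tilde J_3)$ be any almost $\varepsilon$-hyper-complex structure satisfying the three displayed conditions. On ${\cal D}'{}^h$ it agrees with the structure above, since every element there is some $Y'$ with $Y\in\Gamma({\cal D}^h)$ and $\tilde J_\alpha(Y')=(J_\alpha Y)'$ is prescribed. On ${\cal D}'{}^v$, the condition on $X$ forces $\tilde J_\alpha Z'=(J_\alpha Z)'$, the remaining prescribed values fix $\tilde J_\alpha$ on one more vector of the frame $\{Z',(J_1Z)',(J_2Z)',(J_3Z)'\}$, and then $\tilde J_\alpha^2=\epsilon_\alpha Id_{TM'}$ and $\tilde J_1\tilde J_2=\tilde J_3$ determine $\tilde J_\alpha$ on the remaining two frame vectors (for instance $\tilde J_\alpha(J_\alpha Z)'=\epsilon_\alpha Z'$ from $\tilde J_\alpha^2$). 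Since the explicit construction already furnishes a solution there is no consistency obstruction, and it follows that $\tilde J_\alpha=J_\alpha'$. The only step that calls for genuine care rather than bookkeeping is the isomorphism property of $\iota_v$ and $\iota_h$ in the first paragraph — that transversality of $M'$ to $Z_1^P$ together with $\sigma_1\neq 0$ makes the horizontal lift followed by $pr_{TM'}$ injective on ${\cal D}^v$ and ${\cal D}^h$; everything else is formal transport of \eqref{eq:alg}.
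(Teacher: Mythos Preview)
Your approach is essentially the same as the paper's: both define $J'_\alpha$ on each summand of $TM'={\cal D}'{}^v\oplus{\cal D}'{}^h$ by conjugating $J_\alpha$ with the natural transport maps $\iota_v,\iota_h$, and then deduce the $\varepsilon$-quaternion algebra from \eqref{eq:alg}. The paper simply asserts that the matrices of $J'_\alpha$ on ${\cal D}'{}^v$ coincide with \eqref{eq:matrices} and that $J'_{\alpha_1}J'_{\alpha_2}Y'=(J_{\alpha_1}J_{\alpha_2}Y)'$ on ${\cal D}'{}^h$; you supply the details it suppresses, namely that $\iota_v$ and $\iota_h$ are genuinely isomorphisms (using $\eta(Z_1^P)=f_1\neq 0$ and transversality) and that uniqueness follows from the displayed data together with the algebra relations. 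One small remark: your final line ``reproduces $J_\alpha'(J_\beta Z)'=(J_\gamma Z)'$ by the product rules'' is literally correct only for $(\alpha,\beta,\gamma)=(1,2,3)$, since in general $J_\alpha J_\beta=\epsilon_3\epsilon_\gamma J_\gamma$; this mirrors an imprecision in the proposition's own statement and does not affect the argument, as your conjugation construction already yields the full algebra on ${\cal D}'{}^v$ via \eqref{eq:matrices}.
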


\begin{proof}
	Since $J_\alpha$ preserves ${\cal D}^h$ it follows that $J'_\alpha$ preserves ${\cal D}'{}^h$. It is also clear that $J'_\alpha$ preserves ${\cal D}'{}^{v}$. The matrices representing $J_1'|_{{\cal D}'{}^{v}}, J_2'|_{{\cal D}'{}^{v}}, J_3'|_{{\cal D}'{}^{v}}$ are given by Eq.\ \eqref{eq:matrices}, and for all $Y' \in {\cal D}'{}^h$ we have
	\[
		J'_{\alpha_1} J'_{\alpha_2} Y' = J'_{\alpha_1} (J_{\alpha_2} Y)' = (J_{\alpha_1} J_{\alpha_2} Y)' \;.
	\]
Therefore $(J'_1, J'_2, J'_3)$ fulfil the $\varepsilon$-quaternionic algebra

\begin{equation*}
	J_1'{}^2 = \epsilon_1 Id_{TM'}\;,
	\qquad
	J_2'{}^2 = \epsilon_2 Id_{TM'}\;,
	\qquad
	J_3'{}^2 = \epsilon_3 Id_{TM'} \;,
	\qquad
	J_1' J_2' = J_3' \;.
\end{equation*}
\end{proof}

\begin{theorem}\label{mainThm}
	Let $(M,g,J_1,J_2,J_3)$ be an $\varepsilon$-hyper-K\"ahler manifold and let $f \in C^\infty(M)$ be a function on $M$ that fulfils the assumptions  stated above. Choose a rank-one principal bundle $P$ with connection $\eta$ and a submanifold $M' \subset P$ as above. Define
	\[
		Q := \text{span}\{ J_1', J_2', J_3' \} \;,
	\]
	with $J_1', J_2', J_3'$ defined as above. Then $(M',g',Q)$ with
	\begin{equation}	
		g' := \left. \frac{1}{2|f|} \left( g_P - \frac{2}{f} \left( (\theta_1^P)^2 - \epsilon_1 (\theta_0^P)^2 - \epsilon_2 (\theta_3^P)^2 - \epsilon_3 (\theta_2^P)^2 \right) \right) \right|_{M'}
	\end{equation}
is an $\varepsilon$-quaternionic K\"ahler manifold and $X$ is Killing with respect to $g'$.
\end{theorem}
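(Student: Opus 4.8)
The plan is to verify directly the defining properties of an $\varepsilon$-quaternionic K\"ahler manifold for $(M',g',Q)$. Since a pseudo-Riemannian manifold of dimension $4n>4$ that carries a parallel skew-symmetric $\varepsilon$-quaternionic subbundle and has non-vanishing scalar curvature is automatically $\varepsilon$-quaternionic K\"ahler — by the curvature decomposition \eqref{eq:RQK}, cf.\ \cite{AC1} — for $\dim M' > 4$ it suffices to establish: (a) $g'$ is a non-degenerate pseudo-Riemannian metric and each $J'_\alpha$ is $g'$-skew-symmetric; (b) $Q=\mathrm{span}\{J'_1,J'_2,J'_3\}$ is parallel for $\nabla^{g'}$; and (c) $\mathrm{scal}^{g'}\neq 0$. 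In dimension four one must in addition check the curvature decomposition \eqref{eq:RQK} by hand. I would carry out the computations on the total space $P$: the transversality hypothesis $TP|_{M'}=TM'\oplus\mathbb{R}Z_1^P$ identifies $TM'$ with $TP/\mathbb{R}Z_1^P$, all building blocks of $g'$ are already defined on $P$, and the essential inputs are Lemma~\ref{lem:theta}, $d\theta^P_\alpha=\epsilon_1\epsilon_\alpha\pi^*\omega_\alpha$, and the decomposition of $g$ from the Proposition above. (Alternatively one could recognise $M'$ as a twist of $M$ in the sense of Maci\`a--Swann \cite{MS1,MS2}, or build a local $\varepsilon$-hyper-K\"ahler cone over $M'$ and invoke an $\varepsilon$-version of Swann's correspondence; I would instead flesh out the self-contained route.)

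For (a) I would first compute $g'$ on the splitting $TM'=\mathcal{D}'{}^v\oplus\mathcal{D}'{}^h$. The forms $\theta^P_0,\theta^P_2,\theta^P_3$ are pull-backs of $\tfrac{\epsilon_1}{2}g(J_\alpha W,\cdot)$, hence annihilate horizontal lifts of $\mathcal{D}^h$-vectors, and $\theta^P_1=\eta+\tfrac12\beta$ does so too (both $\eta$ and $\beta=g(Z,\cdot)$ vanish on such lifts); so on $\mathcal{D}'{}^h$ one simply gets $g'=\tfrac1{2|f|}\pi^*g|_{\mathcal{D}^h}$, which is non-degenerate. On $\mathcal{D}'{}^v$ I would write out the Gram matrix of the frame $(Z',(J_1Z)',(J_2Z)',(J_3Z)')$ from the elementary data $g_P(\tilde Z,\tilde Z)=\beta(Z)$, $g_P(X_P,X_P)=2/f_1$, $g_P(\tilde Z,X_P)=0$, the values of the $\theta^P_a$ on these vectors, and $f_1=f-\tfrac12\beta(Z)$; the resulting $4\times4$ matrix has determinant a nowhere-vanishing multiple of a power of $f$ and $f_1$ (using that $\sigma,\sigma_1$ are constant and non-zero), so $g'$ is non-degenerate. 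Skew-symmetry of $J'_\alpha$ then reduces on $\mathcal{D}'{}^h$ to that of $J_\alpha$ for $g$ and on $\mathcal{D}'{}^v$ to a finite check using the matrices \eqref{eq:matrices}, so $Q$ is a genuine rank-three $\varepsilon$-quaternionic subbundle of the $g'$-skew endomorphisms.

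The heart of the proof is (b). I would introduce $1$-forms $\theta'_\alpha$ on $M'$ — the restrictions of the $\theta^P_\alpha$ after the rescaling built into $g'$ — and prove structure equations $\nabla^{g'}_U J'_\alpha=\sum_{(\alpha\beta\gamma)}\kappa_{\alpha\beta\gamma}\,\theta'_\beta(U)\,J'_\gamma$ with constants encoding $\mathfrak{so}(3)$ or $\mathfrak{so}(2,1)$; these immediately give that $Q$ is $\nabla^{g'}$-parallel. Rather than expanding Christoffel symbols I would use the Koszul formula together with the Lie brackets of the frame fields $Z',(J_\alpha Z)',Y'$ on $M'$, computed from their horizontal lifts via $[\tilde Y_1,\tilde Y_2]=\widetilde{[Y_1,Y_2]}-\pi^*(\omega_1-\tfrac12 d\beta)(Y_1,Y_2)\,X_P$ and $[\tilde Y,X_P]=0$, followed by projection along $Z_1^P$; the algebraic inputs are exactly the properties of $Z$ recorded above ($Z$ Killing and $J_1$-holomorphic, $\nabla^g J_\alpha=0$, $\mathcal{L}_Z J_2=2\epsilon_1 J_3$, $\mathcal{L}_Z J_3=2J_2$, $\omega_1(Z,\cdot)=-df$). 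A more economical variant is to check that the fundamental four-form $\Omega'$ built from the $\omega'_\alpha:=-\epsilon_\alpha g'(J'_\alpha\cdot,\cdot)$ is closed on $M'$: by the $\varepsilon$-analogue of Swann's theorem a closed $\varepsilon$-quaternionic four-form on a connected manifold of dimension $\ge 8$ is parallel, hence so is $Q$; closedness of $\Omega'$ follows by expressing $\omega'_\alpha$ on $P$ through the $\theta^P_a$, $g_P$ and $\pi^*\omega_\alpha$ and using $d\theta^P_\alpha=\epsilon_1\epsilon_\alpha\pi^*\omega_\alpha$ and $d\pi^*\omega_\alpha=0$. Either way (c) comes for free: the curvature of the induced $Q$-connection turns out to be a fixed non-zero constant multiple of the $\omega'_\alpha$ — the constant pinned down by the $\tfrac1{2|f|}$ normalisation — so $\nu=\mathrm{scal}^{g'}/(4n(n+2))$ is a non-zero constant independent of the additive constant in $f$, which is exactly why shifting $f$ produces a one-parameter deformation within the class of $\varepsilon$-quaternionic K\"ahler metrics. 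In dimension four I would instead verify \eqref{eq:RQK} directly: using the parallel $Q$-connection one writes $R^{g'}=\nu R_0+W$ and checks in the frame adapted to $\mathcal{D}'{}^v\oplus\mathcal{D}'{}^h$ that $W$ is trace-free and $Q$-invariant.

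Finally, $X=-\tfrac1{f_1}Z'$ is Killing for $g'$ because it is the vector field on $M'$ induced by $Z_1^P$ under $TM'\cong TP/\mathbb{R}Z_1^P$, and $Z_1^P=\tilde Z+f_1 X_P$ annihilates every building block of $g'$: $\mathcal{L}_{Z_1^P}g_P=0$ and $\mathcal{L}_{Z_1^P}\theta^P_a=0$ for $a=0,1,2,3$, which follow from $\mathcal{L}_Z g=0$, $\mathcal{L}_Z\omega_1=0$, the relations $\mathcal{L}_Z J_2=2\epsilon_1 J_3$ and $\mathcal{L}_Z J_3=2J_2$, and $\iota_{X_P}$-computations using $\eta(X_P)=1$ and $d\eta=\pi^*(\omega_1-\tfrac12 d\beta)$; hence $\mathcal{L}_{Z_1^P}$ kills the tensor defining $g'$ and restriction to $M'$ gives $\mathcal{L}_X g'=0$. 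The main obstacle is step (b) — the parallelism of $Q$, equivalently the closedness (hence parallelism) of the fundamental four-form, together with the direct curvature check in dimension four; everything else is finite linear algebra in the frame \eqref{eq:matrices} or a transcription of the $Z$-invariance and holonomy identities already available on $M$.
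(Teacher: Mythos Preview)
Your ``economical variant'' is essentially the paper's route, but two genuine gaps remain.

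First, Swann's criterion in dimension eight: you assert that a closed $\varepsilon$-quaternionic four-form in dimension $\ge 8$ forces $Q$ to be parallel, but this is only true for $\dim M'>8$. In dimension eight one needs in addition that the algebraic ideal generated by $(\omega'_1,\omega'_2,\omega'_3)$ is a differential ideal (cf.\ \cite{S}). The paper gets both conditions at once from the single structure equation
\[
\omega'_\alpha=\tfrac{\sigma}{2}\bigl(\epsilon_1\epsilon_\alpha\,d\bar\theta_\alpha+2\epsilon_2\,\bar\theta_\beta\wedge\bar\theta_\gamma\bigr),\qquad \bar\theta_\alpha:=\tfrac{1}{f}\theta^P_\alpha\big|_{M'},
\]
which differentiates to $d\omega'_\alpha=2\epsilon_3(\epsilon_\gamma\bar\theta_\beta\wedge\omega'_\gamma-\epsilon_\beta\bar\theta_\gamma\wedge\omega'_\beta)$; this formula is the heart of the proof and is what your outline is missing. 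Your alternative plan of establishing $\nabla^{g'}J'_\alpha=\sum\kappa_{\alpha\beta\gamma}\theta'_\beta J'_\gamma$ via Koszul and Lie brackets would also work but is considerably longer---the displayed identity short-circuits all of that.

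Second, dimension four: proposing to verify the decomposition \eqref{eq:RQK} by a direct curvature computation in the adapted frame is in principle possible but unwieldy. The paper instead embeds $M$ into $M\times\mathbb{R}^4$ equipped with a flat $\varepsilon$-hyper-K\"ahler structure carrying its own rotating Killing field, applies the already-established eight-dimensional case to the product, and then uses that a $\tilde Q$-invariant submanifold of an $\varepsilon$-quaternionic K\"ahler manifold inherits an $\varepsilon$-quaternionic K\"ahler structure. This avoids any curvature computation in dimension four.

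A smaller point: in your Killing argument, $\mathcal{L}_{Z_1^P}\theta^P_2$ and $\mathcal{L}_{Z_1^P}\theta^P_3$ are \emph{not} zero---they rotate into one another via $\mathcal{L}_Z\theta_2=2\epsilon_1\theta_3$, $\mathcal{L}_Z\theta_3=2\theta_2$. What is invariant is the quadratic combination $-\epsilon_2(\theta^P_3)^2-\epsilon_3(\theta^P_2)^2$ appearing in $g'$ (using $\epsilon_1\epsilon_3+\epsilon_2=0$), so the conclusion survives but the stated reason does not.
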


\noindent {\it Proof for} $\dim_\mathbb{R}\!M>4$:	\\[0.1cm]
Substituting in \eqref{eq:gp} and then \eqref{eq:gdecomp} we find
\begin{align*}
	g' &= \left. \frac{1}{2|f|} \left(   \frac{2}{f_1} \eta^2 + \pi^*g - \frac{2}{f} \left( (\theta_1^P)^2 - \epsilon_1 (\theta_0^P)^2 - \epsilon_2 (\theta_3^P)^2 - \epsilon_3 (\theta_2^P)^2 \right)  \right) \right|_{M'} \\
	&=  \frac{1}{2|f|} \Bigg(   \frac{2}{f_1} \eta^2 + \frac{4}{\beta(Z)} \left( (\theta_1^P - \eta)^2 - \epsilon_1 (\theta_0^P)^2 - \epsilon_2 (\theta_3^P)^2 - \epsilon_3 (\theta_2^P)^2\right) + \pi^* \check{g} \\
	&\hspace{14em}- \frac{2}{f} \left( (\theta_1^P)^2 - \epsilon_1 (\theta_0^P)^2 - \epsilon_2 (\theta_3^P)^2 - \epsilon_3 (\theta_2^P)^2 \right)  \Bigg) \Bigg|_{M'} \;.
\end{align*}
We then use the fact that
\[
	\frac{4}{\beta(Z)} - \frac{2}{f}= \frac{4f_1}{f \beta(Z)} \;,
	\qquad
	\frac{4}{\beta(Z)} + \frac{2}{f_1}= \frac{4f}{f_1 \beta(Z)} \;,
\]
and
\[
	\frac{2}{f_1} \eta^2  + \frac{4}{\beta(Z)}  (\theta_1^P - \eta)^2- \frac{2}{f}  (\theta_1^P)^2 = \frac{4f_1}{f\beta(Z)} ( \theta_1^P - \frac{f}{f_1} \eta)^2 \;,
\]
to write this as
\begin{align*}
	g' &=  \frac{1}{2|f|} \Bigg(   \frac{4f_1}{f \beta(z)} \left( (\theta_1^P - \frac{f}{f_1} \eta)^2  - \epsilon_1 (\theta_0^P)^2 - \epsilon_2 (\theta_3^P)^2  - \epsilon_3 (\theta_2^P)^2\right) + \pi^* \check{g}  \Bigg) \Bigg|_{M'} \\
	&= \lambda \sigma \sigma_1 \left(  (\theta_1')^2 - \epsilon_1 (\theta_0')^2 - \epsilon_2 (\theta_3')^2 - \epsilon_3 (\theta_2')^2 \right) + \frac{1}{2|f|} \pi^* \check{g} \Big|_{M'} \;,
\end{align*}
where
\begin{align*}
	\theta_0' &:= \frac{1}{|f|}  \sqrt{\left| \frac{2f_1}{\beta(Z)} \right|} \theta_0^P \big|_{M'}\;, & 
	\theta_1' &:=\frac{1}{|f|}  \sqrt{\left| \frac{2f_1}{\beta(Z)} \right|} \left. \left(\theta_1^P -\frac{f}{f_1} \eta\right) \right|_{M'}\;, \\
	\theta_2' &:= \frac{1}{|f|}  \sqrt{\left| \frac{2f_1}{\beta(Z)} \right|} \theta_2^P \big|_{M'} \;, & 
	\theta_3' &:= \frac{1}{|f|}  \sqrt{\left| \frac{2f_1}{\beta(Z)} \right|} \theta_3^P \big|_{M'} \;.	
\end{align*}
and
\[
	\lambda := \text{sign} \beta(Z) \;, \qquad \sigma = \text{sign} f\;, \qquad \sigma_1 = \text{sign} f_1 \;.
\]
Since $Z_1^P$ lies in the kernel of $\theta_0^P, \theta_1^P - f/f_1 \eta, \theta_2^P, \theta_3^P$ the above splitting of $g'$ corresponds to the splitting $TM' = {\cal D}'{}^v \oplus {\cal D}'{}^h$ given previously. Therefore the first summand is non-degenerate on ${\cal D}'{}^v$ and has kernel ${\cal D}'{}^h$ whilst the second summand is non-degenerate on ${\cal D}'{}^h$ and has kernel ${\cal D}'{}^v$. Note that this already implies that $g'$ is non-degenerate.	In addition $g'$ is invariant under $X$.

We will show that
\begin{equation}
	\omega_\alpha' := -\epsilon_\alpha g'(J_\alpha \cdot, \cdot) = \frac{\sigma}{2}\epsilon_1 \epsilon_\alpha\, d \bar{\theta}_\alpha +\sigma\epsilon_2\,\bar{\theta}_\beta \wedge \bar{\theta}_\gamma  \;
	\label{eq:omegaprime}
\end{equation}
where we have defined the one-forms
\begin{equation}
	\bar{\theta}_\alpha := \frac{1}{f} \theta_\alpha^P \Big|_{M'} \;. \label{eqThetaBar}
\end{equation}
Differentiating gives
\begin{equation}
	d \omega'_\alpha = 2\epsilon_3 \left(\epsilon_\gamma \bar{\theta}_\beta \wedge \omega'_\gamma - \epsilon_\beta \bar{\theta}_\gamma \wedge \omega'_\beta \right) \;.\label{eqDefThetaBar}
\end{equation}
From these expressions it follows immediately that the fundamental four-form
\[
	\Omega_4 := \sum_{\alpha = 1,2,3} \epsilon_\alpha \, \omega'_\alpha \wedge \omega'_\alpha 
\]
is closed, and that the algebraic ideal generated by $(\omega'_1, \omega'_2, \omega'_3)$ in $\Omega^\ast(M')$ is a differential ideal.
In dimensions greater than eight the closure of the fundamental four-form is enough to show that the metric is para-quaternionic K\"ahler. In dimension eight the closure of the fundamental four-form along with the fact that the fundamental two-forms generate a differential ideal is enough to show that the metric is $\varepsilon$-quaternionic K\"ahler. Both statements were originally stated in \cite{S} for almost quaternionic pseudo-Hermitian manifolds. The proof in the appendix of \cite{S} is based on complex representation theory that does not depend on the respective real form. Hence both statements can be generalised to the para-quaternionic K\"ahler case (see \cite{Dancer:2004} for the first statement in the para-quaternionic case). This then completes the proof of Theorem \ref{mainThm}.

It is left to verify equation \eqref{eq:omegaprime}. Since $(J_1', J_2', J_3')$ agrees with $(J_1, J_2, J_3)$ on ${\cal D}'{}^h$ we have
\[
	\pi^* \left( - \epsilon_\alpha \check{g}\right)|_{M'}(J_\alpha' \cdot, \cdot) = \pi^* (- \epsilon_\alpha\check{g} (J_\alpha \cdot, \cdot))|_{M'} = \pi^* \check{\omega}_\alpha |_{M'} \;.
\]
On ${\cal D}'{}^v$ the vectors $X, J_1 X, J_2 X, J_3 X$ are pairwise orthogonal with respect to 
\[
	(\theta_1')^2 - \epsilon_1 (\theta_0')^2 - \epsilon_2 (\theta_3')^2 - \epsilon_3 (\theta_2')^2 \;,
\]
and fulfil
\[
	\theta_0'(J_1'X) = -\theta_1'(X) = \epsilon_2 \theta'_2(J_3'X) = -\epsilon_2 \theta_3'(J_2'X) = \frac{\lambda \sigma_1}{|f|} \sqrt{\left|\frac{\beta(Z)}{2f_1}\right|}  \neq 0 \;.
\]
We therefore have
\[
	J'_\alpha{}^* \theta'_0 = -\theta'_\alpha \;, 
	\qquad
	J_1'{}^* \theta'_2 = -\theta'_3 \;,
	\qquad
	J_2'{}^* \theta'_3 = \epsilon_2 \theta'_1 \;,
	\qquad
	J_3'{}^* \theta'_1 = \epsilon_1 \theta'_2 \;.
\]
Next, we note that  $\theta_\alpha^P = \pi^* \theta_\alpha$ for $\alpha = 0,2,3$ and
\begin{align*}
	\left. \left( \frac{2}{|f| \beta(Z)} \pi^* \theta_1 - \frac{\sigma}{f^2} \theta_1^P \right) \right|_{M'}
	&= \left( \frac{2}{|f|\beta(Z)} (\theta_1^P - \eta) - \frac{\sigma}{f^2} \theta_1^P \right) \\
	&= \left(\frac{1}{|f|} \frac{2f_1}{f\beta(Z)}(\theta_1^P - \frac{f}{f_1} \eta) \right) 
	= \lambda \sigma \sigma_1 \frac{1}{|f|} \sqrt{\left| \frac{2f_1}{\beta(Z)} \right|} \theta_1' \;.
\end{align*}
From Lemma \ref{lem:theta} we have
\[
	d\bar{\theta}_\alpha = \frac{1}{f} \epsilon_1 \epsilon_\alpha \pi^* \omega_\alpha - \frac{1}{f^2} df \wedge \theta_\alpha^P\;.
\]
Putting everything together we find
\begin{align*}
	\omega_\alpha' &= \lambda \sigma \sigma_1(\epsilon_1 \epsilon_\alpha \theta'_0 \wedge \theta_\alpha' - \epsilon_2 \theta'_\beta \wedge \theta'_\gamma)
	+ \frac{1}{2|f|} \pi^* \check{\omega}_\alpha |_{M'} \\
	&= \left. \left( \frac{1}{2|f|} \pi^* \check{\omega}_\alpha  + \frac{2}{|f|\beta(Z)}  \pi^* (\epsilon_1 \epsilon_\alpha \theta_0 \wedge \theta_\alpha - \epsilon_2 \theta_\beta \wedge \theta_\gamma) - \frac{\sigma}{f^2} (\epsilon_1 \epsilon_\alpha\theta_0^P \wedge \theta_\alpha^P - \epsilon_2 \theta_\beta^P \wedge \theta_\gamma^P)  \right)\right|_{M'} \\
	&= \left. \left( \frac{1}{2|f|} \pi^* \omega_\alpha - \epsilon_1 \epsilon_\alpha \frac{\sigma}{2f^2} df \wedge \theta_\alpha^P + \epsilon_2 \frac{\sigma}{f^2} \theta_\beta^P \wedge \theta_\gamma^P \right)\right|_{M'} \\
	&= \frac{\sigma}{2}(\epsilon_1 \epsilon_\alpha d\bar{\theta}_\alpha + \epsilon_2 2\bar{\theta}_\beta \wedge \bar{\theta}_\gamma) \;.
\end{align*}
This verifies Eq.\ \eqref{eq:omegaprime} and ends the proof in dimension greater than four.
\qed
~\\

\noindent {\it Proof for }$\dim_\mathbb{R}\!M=4$: \\[0.1cm]
The proof in dimension four relies on the fact that on any submanifold $M'$ of an $\varepsilon$-quaternionic K\"ahler manifold $(\tilde{M},\tilde{g},\tilde{Q})$ such that $TM'$ is $\tilde{Q}$-invariant, \[(Q':=\tilde{Q}\big|_{M'},g':=\tilde{g}\big|_{M'})\] defines an $\varepsilon$-quaternionic K\"ahler structure (see \cite[Prop.\ 8]{M} and references therein). This idea is taken from \cite[Cor. 4.2.]{MS2}.

Assume that $\dim_\mathbb{R}\!M=4$. Let $M_0:=\mathbb{R}^4$ be endowed with standard real coordinates $(x,y,u,v)$. Using $z:=x+i_{\epsilon_1}y$ and $w:=u-i_{\epsilon_1}v$, we define an $\varepsilon$-hyper-K\"ahler structure $(g_0,\,J_1^0,\,J_2^0,\,J_3^0)$ by \[g_0:=dzd\bar{z}-\epsilon_2 dwd\bar{w},\qquad \omega^0_+:=\omega_2^0+i_{\epsilon_1}\omega_3^0=dz\wedge dw.\]
Let $f^0:=\epsilon_1\epsilon_2 w\bar{w}\in C^\infty(M_0)$. This defines a $J_1^0$-holomorphic vector field \[Z^0:=-(\omega_1^0)^{-1}(df)=-2\epsilon_1 i_{\epsilon_1}(w\partial_w-\bar{w}\partial_{\bar{w}})\] that fulfils $\mathcal{L}_Z \omega^0_+=-2\epsilon_1i_{\epsilon_1}\omega_+^0$ and, hence, $\mathcal{L}_{Z^0}J_2^0=2\epsilon_1 J_3^0$. 
The one-form $\eta^{M_0}_0:=\frac{1}{2}\mathrm{Im}(\bar{z}dz+\epsilon_2\bar{w}dw)$ fulfils $d\eta^{M_0}_0=\omega_1^0-\frac{1}{2}d(\iota_{Z^0}g_0)$ and we have $f_1^0:=f^0-\frac{1}{2}g_0(Z^0,\,Z^0)=-\epsilon_1\epsilon_2 w\bar{w}$.

Consider $(\tilde{M}:=M\times \mathbb{R}^4,\,\tilde{g}:=g+g_0,\,\tilde{f}:=f+f^0)$ together with the $\varepsilon$-hyper-complex structure $(\tilde{J}_1,\,\tilde{J}_2,\,\tilde{J}_3)$.
Let $\tilde{U}\subset \tilde{M}$ be a neighbourhood of $M=M\times\{0\}\subset\tilde{M}$ such that the signs of $\tilde{f}$, $\tilde{f}_1:=f_1+f_1^0$ and $\tilde{f}-\tilde{f}_1$ restricted to $\tilde{U}$ are constant. Then the restriction of the above data from $\tilde{M}$ to $\tilde{U}$ fulfils the assumptions of the $\varepsilon$-HK/QK correspondence. The restriction of $P\times \mathbb{R}^4$ defines a rank-one principal bundle $\tilde{P}$ over $\tilde{U}$ with connection $\tilde{\eta}=(\eta+\eta_0^{M_0})\big|_{\tilde{P}}$.
The $\varepsilon$-HK/QK correspondence with the choices $(\tilde{P},\,\tilde{\eta},\,\tilde{M}':=M'\times \mathbb{R}^4)$
then defines an $\varepsilon$-quaternionic K\"ahler structure $(\tilde{g}',\,\tilde{Q})$ on the eight-dimensional manifold $\tilde{M}'$. The submanifold $M'=M'\times \{0\}\subset \tilde{M}'$ has a $\tilde{Q}$-invariant tangent bundle and, hence, $(M',\,\tilde{g}'\big|_{M'},\,\tilde{Q}\big|_{M'})$ is $\varepsilon$-quaternionic K\"ahler. The one-forms $\tilde{\bar{\theta}}_\alpha$ on $\tilde{M}'$ obtained from the $\varepsilon$-HK/QK-correspondence (see Eq.\ \eqref{eqThetaBar}) restrict to the corresponding one-forms $\bar{\theta}_\alpha$ on $M'$. The latter one-forms define the $\varepsilon$-quaternionic K\"ahler structure $(g',Q)$ on $M'$ obtained from the $\varepsilon$-HK/QK correspondence by Eq.\ \eqref{eq:omegaprime}, which in particular shows that $(\tilde{g}'\big|_{M'},\,\tilde{Q}\big|_{M'})=(g',\,Q)$.
\qed

~

\begin{remark}
On any $\varepsilon$-quaternionic K\"ahler manifold one can define three one-forms $(\bar{\theta}_\alpha)_{\alpha = 1,2,3}$ by
\label{rem:1}
\begin{equation}
	\nabla_\cdot J_\alpha' = 2\epsilon_3 \epsilon_\alpha\left( \bar{\theta}_\beta(\cdot) \, J'_\gamma - \bar{\theta}_\gamma(\cdot)\, J'_\beta \right) \;,
	\label{eq:NJ}
\end{equation}
which is equivalent to Eq.\ \eqref{eqDefThetaBar}. Then the fundamental two-forms fulfil \cite[Prop.\ 5]{AC2}\footnote{Compared to \cite{AC2} we have $\omega'_\alpha=\rho'_\alpha\!^{\mathrm{[AC2]}}$ and $\bar{\theta}_\alpha=-\frac{\epsilon_3\epsilon_\alpha}{2}\omega_\alpha\!^{\mathrm{[AC2]}}$.}
\begin{equation}
\frac{\nu}{2}\omega'_\alpha=-\epsilon_\alpha d\bar{\theta}_\alpha+2\epsilon_3\bar{\theta}_\beta\wedge\bar{\theta}_\gamma\;,\label{eqAleks}\end{equation}
where $\nu:=\frac{scal}{4n(n+2)}$ $(\mathrm{dim}_{\mathbb{R}}M'=4n)$ is the reduced scalar curvature. Comparing Eqs.\ \eqref{eq:omegaprime} and \eqref{eqAleks} shows that the reduced scalar curvature of $g'$ is $\nu=-\epsilon_1 4\sigma$.
\end{remark}

~

\begin{theorem}
	Let $(M',g',Q)$ be an $\varepsilon$-quaternionic K\"ahler manifold in the image of the $\varepsilon$-HK/QK correspondence as described above. The globally defined almost $\epsilon_1$-complex structure $J_1' \in \Gamma(Q)$ is integrable.
	\label{thm:cx}
\end{theorem}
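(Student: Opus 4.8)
The plan is to show that the Nijenhuis tensor of $J_1'$ vanishes by a direct computation, exploiting the fact that everything on $M'$ is built from data on $M$ via the horizontal-lift and projection constructions. The key observation is that $J_1'$ is assembled out of $J_1$, which is integrable on $M$ in cases (i) and (ii) — i.e.\ in cases $(\epsilon_1,\epsilon_2)=(-1,-1)$ and $(-1,+1)$ — and that $J_1'$ acts on $\mathcal{D}'{}^h$ exactly as $J_1$ acts on $\mathcal{D}^h$ (under the identification $Y\mapsto Y'$), while on the four-dimensional distribution $\mathcal{D}'{}^v$ it is pinned down by the matrix in Eq.\ \eqref{eq:matrices} relative to the frame $(X,(J_1Z)',(J_2Z)',(J_3Z)')$.

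First I would set up the relevant commutator relations. On $P$, the bracket of horizontal lifts satisfies $[\tilde Y_1,\tilde Y_2] = \widetilde{[Y_1,Y_2]} - \pi^*\!\big(\omega_1 - \tfrac12 d\beta\big)(Y_1,Y_2)\, X_P$, since $d\eta = \pi^*(\omega_1 - \tfrac12 d\beta)$ and $[\tilde Y_1,\tilde Y_2]$ differs from $\widetilde{[Y_1,Y_2]}$ only by a vertical term measured by $-d\eta$. Crucially, $\omega_1 = -\epsilon_1 g(J_1\cdot,\cdot)$ is of type $(1,1)$ with respect to $J_1$, so this correction term is $J_1$-invariant in an appropriate sense; this is what will make the vertical correction harmless for the Nijenhuis tensor of $J_1'$. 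I would then push everything down to $M'$ via $pr_{TM'}$ along $Z_1^P$, keeping track of how brackets interact with the projection. The payoff is that the Nijenhuis tensor $N_{J_1'}(U,V)$ for $U,V\in\mathcal{D}'{}^h$ reduces to (the projection of) $N_{J_1}$ evaluated on $\mathcal{D}^h$ plus terms proportional to $(\omega_1 - \tfrac12 d\beta)$ contracted against $J_1$, all of which vanish because $\omega_1$ is $J_1$-invariant and $\beta = g(Z,\cdot)$ with $Z$ being $J_1$-holomorphic so that $d\beta$ is also $J_1$-invariant (equivalently $\iota_{J_1Z}d\beta = -J_1^*\iota_Z d\beta$ up to the relevant sign).

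Next I would handle the mixed and purely vertical cases. For $U,V\in\mathcal{D}'{}^v$, I can work with the explicit frame $(Z',(J_1Z)',(J_2Z)',(J_3Z)')$, using that $\mathcal{L}_Z J_1 = 0$, $\mathcal{L}_Z J_2 = 2\epsilon_1 J_3$, $\mathcal{L}_Z J_3 = 2J_2$ (equivalently the Lie-derivative relations $\mathcal L_Z\theta_\alpha$ recorded in the proof of Proposition 1), together with the brackets $[Z, J_\alpha Z]$ which one can compute from Killing-ness and these Lie-derivative relations. The relation $f_1 = f - \tfrac12 g(Z,Z)$ and $X = -f_1^{-1}Z'$ enters here through the scaling that defines $J_\alpha'X$. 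For the mixed case $U\in\mathcal{D}'{}^v$, $V\in\mathcal{D}'{}^h$, I would use that $[Z, Y]$ for $Y\in\Gamma(\mathcal D^h)$ stays in a controlled range ($Z$ preserves $g$ and $J_1$, hence preserves the splitting up to terms already accounted for), and that $[J_1Z, Y]$ can be rewritten using integrability of $J_1$ on $M$. In each case the outcome is that $N_{J_1'}$ vanishes identically, so $J_1'$ is integrable.

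**The main obstacle** I anticipate is the mixed term $N_{J_1'}(X,Y')$ with $Y'\in\mathcal D'{}^h$: the distribution $\mathcal D^h$ is not $\nabla$-parallel and not preserved by the flow of $J_1Z$, so $[J_1Z,\tilde Y]$ and $[Z,\tilde Y]$ acquire $\mathcal D^v$-components that must be tracked carefully, and the scaling factor $f_1$ appearing in $X = -f_1^{-1}Z'$ differentiates nontrivially. The cleanest way to tame this is probably not a frame-by-frame bracket computation but rather an appeal to the twist-theoretic viewpoint alluded to after Theorem \ref{mainThm}: one knows abstractly that the HK/QK construction is a twist (in the sense of Swann, cf.\ \cite{MS1,MS2}) of the hyper-Kähler data, twists preserve integrability of tensors that are invariant under the relevant $S^1$- (or $\mathbb R$-)action, and $J_1$ is exactly such a tensor since $\mathcal L_Z J_1 = 0$; the rescaling by $|f|$ and the modification of $g_P$ by the $\theta_a^P$ terms only affect the metric, not the almost complex structure $J_1'$, which is defined purely algebraically. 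I would therefore present the bracket computation for the generic ($\dim > 4$) case to make the argument self-contained, and remark that integrability also follows from the twist interpretation, which additionally covers the four-dimensional case by the same submanifold trick used in the proof of Theorem \ref{mainThm}.
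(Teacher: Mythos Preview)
Your approach is valid but genuinely different from the paper's. You propose a direct Nijenhuis-tensor computation (with a twist-formalism fallback), whereas the paper proceeds via the $\varepsilon$-quaternionic K\"ahler moment map of the Killing field $X$: one computes $\iota_X\bar\theta_\alpha$ and $\mathcal L_X\omega'_\alpha$, compares with the general formula \eqref{eq:LXO2} coming from Theorem~\ref{thm:mu}, and reads off $\mu^X=-\tfrac{1}{2|f'|}J_1'$. This immediately gives the covariant derivative $\nabla J_1'=-2|f'|\,(\iota_X\omega'_2\otimes J'_2-\iota_X\omega'_3\otimes J'_3)$, from which integrability follows by a short $(1,0)$-form argument adapted from Battaglia~\cite{B}. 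The paper's route thus extracts more structure (the explicit $\nabla J_1'$ and the moment map), at the cost of invoking the moment-map machinery; your route is more elementary in spirit and fits naturally into the Macia--Swann twist picture the paper already alludes to after Theorem~\ref{mainThm}.

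Two small corrections. First, your parenthetical ``which is integrable on $M$ in cases (i) and (ii)'' is misleading: $J_1$ is parallel, hence integrable, on any $\varepsilon$-hyper-K\"ahler manifold, including the para cases $\epsilon_1=+1$; your argument does not actually use the restriction, so simply drop it. Second, the crucial ingredient making the twist (or bracket-correction) argument go through is that the curvature $d\eta=\omega_1-\tfrac12 d\beta$ is of type $(1,1)$ with respect to $J_1$; you assert this for $d\beta$ but should justify it. The clean way is to note that $\mathcal L_Z J_1=0$ together with $\nabla J_1=0$ gives $\nabla_{J_1X}Z=J_1\nabla_XZ$, i.e.\ the endomorphism $\nabla Z$ commutes with $J_1$, whence $d\beta(\cdot,\cdot)=2g(\nabla_\cdot Z,\cdot)$ is $(1,1)$. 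With that in hand, your mixed-case obstacle dissolves: the vertical correction terms in $[\widetilde{J_1Y_1},\widetilde{Y_2}]$ etc.\ assemble into the $J_1$-invariant combination $d\eta(J_1Y_1,J_1Y_2)+\epsilon_1 d\eta(Y_1,Y_2)$ and its partners, which vanish. Finally, there is no separate four-dimensional issue for integrability of $J_1'$; the dimension restriction in Theorem~\ref{mainThm} concerned the $\varepsilon$-quaternionic K\"ahler condition, not the Nijenhuis tensor.
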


\begin{proof}

Let $a \in {C}^\infty(P)$ such that $X = (X_P - aZ_1P)|_{M'}\in {\mathfrak X}(M').$ 
We will identify the $\varepsilon$-quaternionic K\"ahler moment map associated with $X$ and use it to prove that $J_1'$ is integrable.

The Killing vector field $X$ satisfies 
\[
	(\iota_X \bar{\theta})_{\alpha =1,2,3} = \left((f'{}^{-1} - a') , 0, 0\right) \;,
	\qquad
	(\iota_X d \bar{\theta})_{\alpha = 1,2,3} =\left(  f'{}^{-2} df' ,  -\epsilon_2a' 2 \bar{\theta}_3, \epsilon_2 a' 2 \bar{\theta}_2 \right) \;,
\]
where we have defined $f' = f|_{M'}$ and $a' = a|_{M'}$. The Lie derivative of $\bar{\theta}_\alpha$ is therefore
\[
	{\cal L}_X \bar{\theta}_\alpha = \left(-da', -\epsilon_2 a' 2 \bar{\theta}_3, \epsilon_2 a'2 \bar{\theta}_2\right) \;.
\]
From \eqref{eqAleks} it  follows that
\[
	{\cal L}_X \omega'_\alpha = \frac{2}{\nu} \left[ -\epsilon_\alpha {\cal L}_X d \bar{\theta}_\alpha + \epsilon_3 2{\cal L}_X \bar{\theta}_\beta \wedge \bar{\theta}_\gamma + \epsilon_32  \bar{\theta}_\beta \wedge {\cal L}_X\bar{\theta}_\gamma \right]\;,
\]
which is calculated to be
\begin{equation}
	({\cal L}_X \omega'_\alpha)_{\alpha = 1,2,3} = \big( 0, -\epsilon_3 2a' \omega'_3, -2a' \omega'_2\big) \;.
	\label{eq:LXO1}
\end{equation}

The following theorem is proved in  \cite[Thm.\ 2.4]{GL} in the quaternionic case and \cite[Thm 5.2]{V} in the para-quaternionic case.
\begin{theorem}
	Let $X \in {\mathfrak X}(M') $ be a Killing vector field on an $\varepsilon$-quaternionic K\"ahler manifold $(M',g',Q)$. There exists a unique section $\mu^X \in \Gamma(Q)$ on an open subset $U \subset M'$ such that
	\[
		\nabla_\cdot \mu^X \big|_U = \omega'_\alpha(X, \cdot) J'_\alpha \;.
	\] 	\label{thm:mu}
\end{theorem}
\begin{proposition}
	The Lie derivative of $\omega'_\alpha$ may be written in terms of $\mu^X =: \sum_{\alpha = 1}^3 \mu^X_\alpha J'_\alpha$ as
\begin{equation}
	{\cal L}_X \omega'_\alpha = \epsilon_3 (\epsilon_\alpha \nu\mu^X_\beta + \epsilon_\gamma 2 \bar{\theta}_\beta(X))\omega'_\gamma -  \epsilon_3  (\epsilon_\alpha \nu \mu^X_\gamma + \epsilon_\beta 2\bar{\theta}_\gamma(X))\omega'_\beta\;. \label{eq:LXO2}
\end{equation}
\end{proposition}
\begin{proof}
	From \eqref{eq:NJ} it follows that
	\begin{equation}
		\nabla_\cdot \omega'_\alpha= \epsilon_32 (\bar{\theta}_\beta (\cdot)  \epsilon_\gamma \omega'_\gamma - \bar{\theta}_\gamma(\cdot) \epsilon_\beta \omega'_\beta ) \;.
		\label{eq:NO}
	\end{equation}
\noindent Expanding  $\mu^X = \sum_{\alpha = 1}^3 \mu^X_\alpha J'_\alpha$ and making use of \eqref{eq:NJ} we find
	\begin{align*}
		\nabla_\cdot (\mu^X_\alpha J'_\alpha) 
		&= d\mu^X_\alpha J'_\alpha +  \epsilon_3 \epsilon_\alpha \mu^X_\alpha  2(\bar{\theta}_\beta(\cdot)J'_\gamma - \bar{\theta}_\gamma (\cdot) J'_\beta) \;,
	\end{align*}
and since $(J'_1, J'_2, J'_3)$ are linearly independent it follows from Theorem \ref{thm:mu} that 
	\begin{equation}
		d\mu^X_\alpha + \epsilon_3 \epsilon_\beta 2\mu_\beta^X \bar{\theta}_\gamma - \epsilon_3 \epsilon_\gamma 2\mu_\gamma^X \bar{\theta}_\beta = \iota_X \omega'_\alpha \;.
		\label{eq:dmu}
	\end{equation}
	Equations \eqref{eq:NO} and \eqref{eq:dmu} together with the fact that $\epsilon_\alpha \epsilon_\beta = -\epsilon_\gamma$ imply
	\begin{align}
		 \text{alt}(\nabla_\cdot \omega'_\alpha)(X,\cdot) 
		 &=  \epsilon_3  (\bar{\theta}_\beta \wedge \epsilon_\gamma d\mu_\gamma^X - \bar{\theta}_\gamma \wedge \epsilon_\beta d \mu^X_\beta) \notag\\
		&\hspace{2em} + \epsilon_\alpha 2  \mu^X_\beta \bar{\theta}_\beta \wedge \bar{\theta}_\alpha
		 +\epsilon_\alpha 2 \mu^X_\gamma \bar{\theta}_\gamma \wedge \bar{\theta}_\alpha  \;, \label{eq:OX}
	\end{align}
where alt is the anti-symmetrisation operator, i.e.\ 
\[
 \text{alt}(\nabla_Y \omega'_\alpha)(X,Z) =  \frac12((\nabla_Y \omega'_\alpha)(X,Z) - (\nabla_Z \omega'_\alpha)(X,Y)) \;.
\]
The Lie derivative of $\omega'_\alpha$ may be written as
	\begin{align*}
		{\cal L}_X \omega'_\alpha 
		&= \iota_X d\omega'_\alpha + d\iota_X \omega' \\
		&= \nabla_X \omega'_\alpha - 2\,\text{alt}(\nabla_\cdot \omega'_\alpha)(X,\cdot) + \epsilon_3 2d(\epsilon_\beta \mu^X_\beta \bar{\theta}_\gamma - \epsilon_\gamma  \mu^X_\gamma \bar{\theta}_\beta)  \;.
	\end{align*}
Substituting \eqref{eq:NO}, \eqref{eq:dmu} and \eqref{eq:OX} into the above expressions produces the desired result.
\end{proof}

Comparing the two expressions \eqref{eq:LXO1} and \eqref{eq:LXO2} for the Lie derivative of $\omega'_\alpha$ we find 
\begin{equation}
	\mu^X_\alpha = \left( -\frac{1}{2|f'|} , 0, 0\right) \qquad \Rightarrow \qquad \mu^X = -\frac{1}{2|f'|} J'_1 \;.
	\label{eq:muX}
\end{equation}

	The expression \eqref{eq:muX} is already enough to prove that $J'_1$ is an integrable $\epsilon_1$-complex structure. To show this we will adapt the proof of the statement in the case of an almost-complex structure on a quaternionic K\"ahler manifold given in \cite[Prop.\ 3.3]{B}.	Using \eqref{eq:dmu} we have $\bar{\theta}_2 = \epsilon_2 |f'| \iota_X \omega'_3$ and $\bar{\theta}_3 = -\epsilon_2 |f'| \iota_X \omega'_2$, hence
	\begin{equation}
		\nabla J'_1 = -2|f'|(\iota_X \omega'_2 \otimes J'_2 - \iota_X \omega'_3 \otimes J'_3) \;.
		\label{eq:NJ1}
	\end{equation}
	Fix a point $x \in M'$. Consider a one-form $\alpha$ that satisfies  $(\nabla \alpha)_x = 0$. Define the $\epsilon_1$-complex one-form $A = \alpha + \epsilon_1 i_{\epsilon_1} J'_1{}^* \alpha$, where $i_{\epsilon_1}$ is the $\epsilon_1$-complex unit satisfying $i_{\epsilon_1}^2 = \epsilon_1$ and $\bar{i}_{\epsilon_1} = -i_{\epsilon_1}$. The one-form $A$ is $J'_1$-holomorphic, that is $J'_1{}^* A = i_{\epsilon_1} A $.
	Using \eqref{eq:NJ1} we have
	\[
		(\nabla A)_x =  -\epsilon_1 i_{\epsilon_1}( (\nabla J'_1{}^*) \alpha)_x = \epsilon_1 i_{\epsilon_1} 2|f'| \left( \iota_X  \omega'_2  \otimes J'_2{}^* \alpha + \iota_X  \omega'_3  \otimes J'_3{}^* \alpha \right) \big|_x\;,
	\]
	and therefore 
	\[
		(d A)_x = \epsilon_1 i_{\epsilon_1} 2|f'| \left( \iota_X  \omega'_2  \wedge J'_2{}^* \alpha + \iota_X  \omega'_3  \wedge J'_3{}^* \alpha \right)  \big|_x\;.
	\]

	We now define
	\[
		\beta_{i} := 2|f'| \iota_X \omega'_{i} \;,
		\qquad
		\gamma_{i}^\alpha := J'_{i}{}^*\alpha \;,
		\qquad
		i = 2,3 \;,
	\]
	which satisfy
	\[
		J'_1{}^* \beta_{2} = \epsilon_1 \beta_3 \;, \qquad J'_1{}^*\beta_3 = \beta_2 \;,
		\qquad
		J'_1{}^* \gamma^\alpha_{2} = -\gamma^\alpha_3 \;, \qquad J'_1{}^* \gamma^\alpha_3 = -\epsilon_1 \gamma^\alpha_2 \;.
	\]
	We may then write
	\begin{align*}
		(d A)_x 
		 &= -\frac{\epsilon_1 i_{\epsilon_1}}{2} \left( B \wedge \bar{C}^\alpha
		 + \bar{B} \wedge C^\alpha \right)_x \;,
	\end{align*}
	where $B := \beta_2  + \epsilon_1 i_{\epsilon_1} J'_1{}^* \beta_2$ and $C^\alpha := \gamma^\alpha_2 + \epsilon_1 i_{\epsilon_1} J'_1{}^* \gamma^\alpha_2$\;. Since $B$ and $C^\alpha$ are $J'_1$-holomorphic this shows that $J'_1$ is integrable by the Newlander--Nirenberg theorem in the complex case and Frobenius' theorem in the para-complex case. 
This completes the proof of Theorem \ref{thm:cx}.
\end{proof}

\section{One-parameter deformations of the local temporal and Euclidean $c$-maps}

In this section we will consider three important examples of the $\varepsilon$-HK/QK correspondence. They are related to constructions in the physics literature known as the local (or supergravity) spatial, temporal and Euclidean $c$-maps.

Let $\epsilon_1,\epsilon_2\in\{-1,1\}$ and $\epsilon_3=-\epsilon_1\epsilon_2$ as in Section 1. 
We start with a conical affine special $\epsilon_1$-K\"ahler (CAS$\epsilon_1$K) manifold $M$ and consider the $\varepsilon$-hyper-K\"ahler manifold $N$ obtained from the global (or rigid) spatial $c$-map for $(\epsilon_1, \epsilon_2) = (-1, -1)$, temporal $c$-map for $(\epsilon_1, \epsilon_2) = (-1, 1)$ or Euclidean $c$-map for $(\epsilon_1, \epsilon_2) = (1, \pm1)$. 
One can identify $N$ with the cotangent bundle of $M$.
Using a natural vector field fulfilling the assumptions of Section \ref{secPHKQK}, we apply the $\varepsilon$-HK/QK correspondence to $N$ and obtain a family of $\varepsilon$-quaternionic K\"ahler manifolds $\bar{N}_c$ that depends on a  real parameter $c$. 
For $c=0$ the result agrees with the $\varepsilon$-quaternionic K\"ahler manifold obtained from the projective special $\epsilon_1$-K\"ahler (PS$\epsilon_1$K) manifold $\bar{M}$ underlying $M$ via the local spatial, temporal or Euclidean $c$-map. 
This construction is summarised in the following diagram:
\[
	\xymatrix
	{
		\stackrel[\text{CAS$\epsilon_1$K}, \; 2n + 2]{}{M} \ar[dd]  \ar@{|->}[rrr(0.95)]^{\text{global $c$-map}}& &  &  \hspace{3.5em} \stackrel[\text{$\varepsilon$-HK}, \; 4n + 4]{}{N = T^* M} \ar@{|->}@<3.8ex>[dd]^{\text{$\varepsilon$-HK/QK}}  \\
		\\
		\stackrel[\text{PS$\epsilon_1$K}, \; 2n]{}{\bar{M}} \ar@{|->}[rrr]^{\text{local $c$-map}}_{(\text{undeformed})} & &    &  \bar{N}_{c = 0} \in  \stackrel[\text{$\varepsilon$-QK}, \; 4n + 4]{}{\bar{N}_c}
	}
\]
In the specific case of the local spatial $c$-map there is a known one-parameter deformation called the \emph{one-loop deformation} of the local spatial $c$-map \cite{RoblesLlana:2006ez}. It was shown in \cite{ACDM} that the family of target manifold $\bar{N}_c$ in the above example of the $\varepsilon$-HK/QK correspondence with $(\epsilon_1, \epsilon_2) = (-1, -1)$ (i.e.\ the HK/QK correspondence) and $c \neq 0$ is in exact agreement with the family of target manifolds of the of the one-loop deformation of the local spatial $c$-map.
On the other hand, no deformations of the local temporal or Euclidean $c$-maps have appeared in the literature.
We will show that the above example of the $\varepsilon$-HK/QK with $(\epsilon_1, \epsilon_2) = (-1, 1)$ or $(1, \pm1)$ and $c \neq 0$ results in two new families of para-quaternionic K\"ahler manifolds $\bar{N}_c$ that can be understood as one-parameter deformations of the local temporal and Euclidean $c$-map target manifolds.

Let $(M, g_M, J, \nabla,\xi)$ be a conical affine special $\epsilon_1$-K\"ahler manifold \cite{CM} of dimension $\mathrm{dim}_{\mathbb{R}} M=2(n+1)$. We assume that $g(\xi,\xi)>0$ and that if $\epsilon_1=-1$ then $g_M\big|_{\{\xi,J\xi\}^\perp\subset TM}<0$. 
Let $X=(X^I)=(X^0,\ldots,X^n): U\subset M\stackrel{\sim}{\to} \tilde{U}\subset \mathbb{C}^{n+1}$ be a set of conical special $\epsilon_1$-holomorphic coordinates such that the geometric data on the domain $U\subset M$ is encoded in an $\epsilon_1$-holomorphic function $F:\tilde{U}\to\mathbb{C}$ that is homogeneous of degree 2. The metric may then be written as\footnote{Note that apart from interchanging $X$ and $z$, we use conventions in this section that agree with \cite{ACDM} for  $(\epsilon_1, \epsilon_2) = (-1, -1)$. Compared to \cite{Cortes:2015wca} $N_{IJ}$ is defined in terms of the $\epsilon_1$-holomorphic prepotential $F$ with an extra minus sign. The same holds true for the definition of the projective special $\epsilon_1$-K\"ahler metric $g_{\bar{M}}$ in terms of the conical affine special $\epsilon_1$-K\"ahler metric $g_M$.
}
\begin{equation}
g_M=N_{IJ}dX^Id\bar{X}^J \;,
\qquad 
N_{IJ}(X,\bar{X}):=i_{\epsilon_1}(\bar{F}_{IJ}(\bar{X})-F_{IJ}(X))=-2\epsilon_1 \mathrm{Im}\,F_{IJ}(X) \;,
\end{equation}
and the Euler vector field as $\xi=X^I\partial_{X^I}+\bar{X}^I\partial_{\bar{X}^I}$, where $F_{IJ}(X):=\frac{\partial^2F(X)}{\partial X^I\partial X^J}$ for $I,J=0,\ldots,n$. The $\epsilon_1$-K\"ahler potential for $g_M$ is given by $r^2=g_M(\xi,\xi)=X^IN_{IJ}(X,\bar{X})\bar{X}^J$.

We will assume that $\xi$ and $J\xi$ induce free $\mathbb{R}^{>0}$- and $A^{J\xi}$-actions, respectively, where\footnote{The unit para-complex numbers have four connected components. Here the subscript $_0$ denotes the connected component of $1\in C$.}
\begin{equation}
	A^{J\xi} :=
	\begin{cases}
		\{ z = x + iy \in {\mathbbm C} \; :\; |z|^2 = x^2 + y^2 = 1 \} \simeq S^1 & \text{if} \;\;\; \epsilon_1 = -1 \\
		\{ z = x + ey \in {C} \; :\; |z|^2 = x^2 - y^2 =  1 \}_{_0}  \simeq {\mathbbm R}   & \text{if}  \;\;\; \epsilon_1 = +1 \;.
	\end{cases}
\end{equation}
Notice that $\mathbb{R}^{>0} \times A^{J\xi} \simeq {\mathbbm C}^*$ if $\epsilon_1 = -1$, and $\mathbb{R}^{>0} \times A^{J\xi} \simeq {C}^*_0$ if $\epsilon_1 = 1$.
Let \begin{equation}
\bar{\pi}:= M\to \bar{M}:=M/(\mathbb{R}^{>0} \times A^{J\xi}) \;,
\end{equation}
and define $r:=\sqrt{g_M(\xi,\xi)}$. Let $(\bar{M},-g_{\bar{M}},J_{\bar{M}})$ be the $\epsilon_1$-K\"ahler manifold obtained from the $\epsilon_1$-K\"ahler quotient with level set $\{r=1\}\subset M$. Then $(\bar{M},g_{\bar{M}},J_{\bar{M}})$ is a projective special $\epsilon_1$-K\"ahler manifold \cite{CM} that is positive definite if $\epsilon_1=-1$ and $g_M$ has complex inverse-Lorentz signature. Note that
\begin{equation}
g_M=dr^2-\epsilon_1 r^2 \tilde{\eta}^2-r^2\bar{\pi}^\ast g_{\bar{M}}\;,
\end{equation}
where
\begin{equation}
\tilde{\eta}:=\frac{1}{r^2}g_M(J\xi,\cdot)=-\frac{\epsilon_1}{r^2}\omega_1(\xi,\cdot)=d^c\,\mathrm{log}\,r=i_{\epsilon_1}(\bar{\partial}-\partial)\,\mathrm{log}\,r\;.\label{eqCASKmetricDecomp}
\end{equation}
Let us assume that $X^0\bar{X}^0>0$ and $\mathrm{Re}\,X^0>0$. Then the $(\mathbb{R}^{>0}\times A^{J\xi})$-invariant functions $z^\mu:=\frac{X^\mu}{X^0}$, $\mu=1,\ldots,n$, define a local $\epsilon_1$-holomorphic coordinate system on $\bar{M}$. The $\epsilon_1$-K\"ahler potential for $g_{\bar{M}}$ is \begin{equation}
\mathcal{K}:=-\mathrm{log} \left( \,z^IN_{IJ}(z,\bar{z})\bar{z}^J \right) \;.
\end{equation}

The spatial ($(\epsilon_1, \epsilon_2) = (-1, -1)$), temporal ($(\epsilon_1, \epsilon_2) = (-1, 1)$) and Euclidean ($(\epsilon_1, \epsilon_2) = (1, \pm1)$) global $c$-map assigns an $\varepsilon$-hyper-K\"ahler manifold to any affine special $\epsilon_1$-K\"ahler manifold \cite{CMMS}. We will now review this construction. First of all, note that the real coordinates \begin{equation}
(q^a)_{a=1,\ldots,2n+2}:=(x^I,y_J)_{I,J=0,\ldots,n}:=(\mathrm{Re}\,X^I,\mathrm{Re}\,F_J(X))_{I,J=0,\ldots,n}
\end{equation} on $M$ are $\nabla$-affine and fulfil \begin{equation}
\omega_M:=-\epsilon_1
g_M(J\cdot,\cdot)=-2 dx^I\wedge dy_I.
\end{equation}
With respect to the coordinates $(q^a)$, the function $H:=\frac{1}{2}X^IN_{IJ}\bar{X}^J$ on $M$ is a Hesse potential, i.e.\ $g_M=H_{ab}dq^adq^b$ where $H_{ab} := \frac{\partial^2H}{\partial q^a \partial q^b}$. 
The matrix-valued function $(H_{ab})_{a,b=1,\ldots,2n+2}$ and its inverse can be calculated to be
\begin{equation}
(H_{ab})=\begin{pmatrix}N-\epsilon_1RN^{-1}R & \epsilon_1 2RN^{-1}\\\epsilon_12N^{-1}R&-\epsilon_14N^{-1}\end{pmatrix},
\quad (H^{ab})=\begin{pmatrix}N^{-1} & \frac{1}{2}N^{-1}R\\\frac{1}{2}RN^{-1}&\frac{1}{4}(-\epsilon_1N+RN^{-1}R)\end{pmatrix},
\end{equation}
where $R_{IJ}:=2\mathrm{Re}\,F_{IJ}(X)$, i.e.\ $F_{IJ}=\frac{1}{2}(R_{IJ}-\epsilon_1i_{\epsilon_1}N_{IJ})$.
We consider the cotangent bundle $N:=T^\ast M$ and introduce real functions $(p_a):=(\tilde{\zeta}_I,\zeta^J)$ on $N$ such that together with the pullback of $(q^a)$ to $N$, they form a system of canonical coordinates, i.e.\ $p_a dq^a\in T^\ast M\mapsto (q^a,p_b)$.
The $\varepsilon$-hyper-K\"ahler structure on $N$ defined in \cite{CMMS} is given by
\begin{align}
g&=H_{ab}dq^adq^b+\epsilon_1\epsilon_2 H^{ab}dp_adp_b\nonumber\\
\omega_1&=-\Omega_{ab}dq^a\wedge dq^b+\frac{\epsilon_2}{4}\Omega^{ab}dp_a\wedge dp_b=-2dx^I\wedge dy_I-\frac{\epsilon_2}{2}d\tilde{\zeta}_I\wedge d\zeta^I\nonumber\\
\omega_2&=\epsilon_1 2\Omega_{ac}H^{cb}dq^a\wedge dp_b\nonumber\\
\omega_3&=dq^a\wedge dp_a=dx^I\wedge d\tilde{\zeta}_I+dy_I\wedge d\zeta^I,
\end{align}
where $(\Omega_{ab})=-(\Omega^{ab})=\begin{pmatrix}0& {\mathbbm 1}_{n+1}\\-{\mathbbm 1}_{n+1}&0
\end{pmatrix}$.

To make contact with the formulation of the local spatial, temporal and Euclidean $c$-map given in \cite{Cortes:2015wca} later in this section, we consider the tangent bundle $\tilde{N}:=TM$ of $M$ and introduce real functions $(\hat{q}^a)$ on $\tilde{N}$ by $\hat{q}^a \frac{\partial}{\partial q^a}\in T M\mapsto (q^a,\hat{q}^b)$. We identify $N=T^\ast M$ with $\tilde{N}=TM$ using the $\epsilon_1$-K\"ahler form $\omega_M: TM\to T^\ast M,~v\mapsto \omega_M(v,\cdot)$. After this identification, we have
\begin{equation}
\hat{q}^a=\frac{1}{2}\Omega^{ab}p_b.
\end{equation}
The the $\varepsilon$-hyper-K\"ahler structure on $\tilde{N}$ is given as follows (note that $\Omega^{ab} H_{bc}\Omega^{cd} = \epsilon_1 4 H^{ad}$):
\begin{align}
	g &= H_{ab} dq^a dq^b - \epsilon_2 H_{ab} d\hat{q}^a d\hat{q}^b \;,\nonumber\\
	\nonumber
		\omega_1 &= -\Omega_{ab} dq^a \wedge dq^b - \epsilon_2  \Omega_{ab} d\hat{q}^a \wedge d\hat{q}^b \;, \\\nonumber
		\omega_2 &= -H_{ab} d\hat{q}^a \wedge d{q}^b \;, \\
		\omega_3 &= 2 \Omega_{ab} d{q}^a \wedge d\hat{q}^b \;. 
\end{align}
Note that the complex structures are given by
\begin{align}\nonumber
		J_1 &= J^{a}_{\;\;b}  \frac{\partial}{\partial q^a} \otimes dq^b  - J^{a}_{\;\;b}  \frac{\partial}{\partial \hat{q}^a} \otimes d\hat{q}^b \;, \\
		J_2 &=  \frac{\partial}{\partial \hat{q}^a} \otimes dq^a  + \epsilon_2 \frac{\partial}{\partial q^a} \otimes d\hat{q}^a  \;, \nonumber\\ 
		J_3 &= \epsilon_2 J^{a}_{\;\;b}  \frac{\partial}{\partial q^a} \otimes d\hat{q}^b - J^{a}_{\;\;b}  \frac{\partial}{\partial \hat{q}^a} \otimes d{q}^b \;,  
	\end{align}
	where $J^{a}_{\;\;b}=-\tfrac12\Omega^{ac} H_{cb}$.

Consider the lift of the vector field $-\epsilon_1 2J\xi$ on $M$ to a vector field $Z$ on $\tilde{N}=TM$ such that $Z(\hat{q}^a)=0$, i.e.\
\begin{equation}
Z := -\epsilon_1 H_a \Omega^{ab} \frac{\partial}{\partial q^b} \in \mathfrak{X}(\tilde{N}) \;.
\end{equation}
The vector field $Z$ is a $J_1$-holomorphic Killing vector field such that ${\cal L}_Z J_2 = \epsilon_1 2J_3$. Since
\[
	d(-\epsilon_1 2H) = - \omega_1 (Z, \cdot)\;,
\]
the function $f = -\epsilon_1 (2H - c)$ fulfils $df=-\omega_1(Z,\cdot)$ for any $c\in \mathbb{R}$. We may therefore apply the $\varepsilon$-HK/QK correspondence to $\tilde{N}$ endowed with the above $\varepsilon$-hyper-K\"ahler structure. 
We begin by calculating
	\begin{align*}
		\beta = g(Z,\cdot)=- 4 q^a \Omega_{ab} dq^b \;,
		\quad
		\beta(Z) = -\epsilon_1 8H \;,
		\quad
		f_1 = f-\frac{1}{2}\beta(Z)=\epsilon_1 ( 2H + c)\;,
	\end{align*}
\begin{equation}
	\omega_1 - \frac12 d\beta = -\epsilon_2 \Omega_{ab} d\hat{q}^a \wedge d\hat{q}^b + \Omega_{ab} d{q}^a \wedge d{q}^b  \;.
\end{equation}
Let $\pi: P:= {\mathbbm R} \times \tilde{N}\to\tilde{N}$ be the trivial ${\mathbbm R}$-bundle over $\tilde{N}$ and let $s$ denote the standard coordinate on the fibre of $P$ such that $X_P=\frac{\partial}{\partial s}$. We define $\tilde{\phi}:=-\epsilon_2 2s$.
Using the above information we see that the connection one-form
\begin{equation}
\eta := -\epsilon_2 \tfrac12 d\tilde{\phi} - \epsilon_2 \hat{q}^a \Omega_{ab} d\hat{q}^b + {q}^a \Omega_{ab} d{q}^b \;
\end{equation}
has curvature $d\eta=\pi^\ast(\omega_1-\frac{1}{2}d\beta)$ and satisfies $\eta(X_P) = 1$. 
The one-forms defined in Eq.\ \eqref{eqDefThetaP} are calculated to be:
\begin{align}
	\theta_0^P &=\frac{1}{2}df= -\epsilon_1 dH \;, \notag \\
	\theta_1^P &=\eta+\frac{1}{2}\beta= -\epsilon_2 \tfrac12 d\tilde{\phi} - \epsilon_2 \hat{q}^a \Omega_{ab} d\hat{q}^b - {q}^a \Omega_{ab} d{q}^b \;, \notag \\
	\theta_2^P &= -\frac{\epsilon_2}2\omega_3(Z, \cdot)= \epsilon_1\epsilon_2 H_a d\hat{q}^a \;, \notag \\
	\theta_3^P &= \frac{\epsilon_2}2\omega_2(Z, \cdot)=- \epsilon_2 2q^a \Omega_{ab} d\hat{q}^b \;. 
\end{align} 
The metric $g_P=\frac{2}{f_1}\eta^2+g$ is given by
\begin{equation}
	g_P = H_{ab} \left( dq^a dq^b - \epsilon_2 d\hat{q}^a d\hat{q}^b \right)
	+ \epsilon_1 \frac{2}{(2H + c)} \left(\frac{1 }2 d\tilde{\phi} + \hat{q}^a \Omega_{ab} d\hat{q}^b - \epsilon_2 {q}^a \Omega_{ab} d{q}^b  \right)^2 \;.
\end{equation}
A degenerate tensor field $\tilde{g}$ on $P$ that restricts to the $\varepsilon$-quaternionic K\"ahler metric $g'$ given in Eq.\ \eqref{eqDefQKMetricHKQK} on any appropriate submanifold $M'$ is given by
\begin{align}
	\epsilon_1  2\sigma \tilde{g} &= \tilde{H}_{ab} \left( dq^a dq^b - \epsilon_2 d\hat{q}^a d\hat{q}^b  \right) \notag \\
	&\hspace{2em} + \epsilon_1 \epsilon_2  \frac{8}{(2H - c)^2} \left( q^a \Omega_{ab} d\hat{q}^b \right)^2
	-\epsilon_1 \frac{4}{2H(2H - c)} (q^a\Omega_{ab} dq^b)^2 \notag \\
	&\hspace{2em} - \epsilon_1 \frac{8H}{(2H - c)^2(2H + c)} \left[ \left(\frac{1}2 d\tilde{\phi} + \hat{q}^a \Omega_{ab} d\hat{q}^b\right) + \epsilon_2 \frac{c}{2H} {q}^a \Omega_{ab} d{q}^b \right]^2\;, 
	\label{eq:gprime}
\end{align}
where $\tilde{H}_{ab} := \frac{\partial^2}{\partial q^a \partial q^b} \tilde{H}$, and  $\tilde{H} := -\frac12 \log (2H - c)$. The horizontal lift of $Z \in \mathfrak{X}(M)$ to $\tilde{Z} \in \mathfrak{X}(P)$ is given by 
\[
	\tilde{Z} = Z - \eta(Z) X_P = - \epsilon_1 H_a \Omega^{ab} \frac{\partial}{\partial q^b} - \epsilon_1 2H X_P\;.
\]
The fundamental vector field is given by $X_P = - \epsilon_2 2 \frac{\partial}{\partial \tilde{\phi}}$, and therefore the vector field $Z_1^P \in \mathfrak{X}(P)$ is given by 
\begin{equation}
Z_1^P = \tilde{Z} + f_1 X_P = -\epsilon_1 H_a \Omega^{ab} \frac{\partial}{\partial q^b} -\epsilon_1 \epsilon_2 2c \frac{\partial}{\partial \tilde{\phi}} \;.\label{eqZPCMap}
\end{equation}
As a corollary of Theorem \ref{mainThm}, we obtain that the restriction of the tensor field given in Eq.\ \eqref{eq:gprime} to any codimension one submanifold $M'\subset \tilde{N}$ that is transversal to the vector field given in Eq.\ \eqref{eqZPCMap} defines an $\varepsilon$-quaternionic K\"ahler metric for any $c\in \mathbb{R}$ (after restriction to open subsets where $2H-c$ and $2H+c$ have constant sign).

Setting $c = 0$ in Eq.\ \eqref{eq:gprime} reproduces the formula for the target metrics of the local spatial, temporal and Euclidean $c$-maps  in \cite[Sec.\ 4.2]{Cortes:2015wca} up to an overall factor given by\footnote{%
This implies that the reduced scalar curvatures are related by $\nu = {\epsilon_1}{2} \sigma \nu^{[CDMV]}$. From Remark \ref{rem:1} we have $\nu = -\epsilon_1 4 \sigma$, which is consistent with $\nu^{[CDMV]} = -2$.} 
$\tilde{g} = \frac{\epsilon_1\sigma}{2} g'^{[CDMV]}$.
This shows that the families of $\varepsilon$-quaternionic K\"ahler manifolds defined above describe one-parameter deformations of the local spatial, temporal and Euclidean $c$-map metrics.

\subsection{Ferrara-Sabharwal form}

In this subsection we will write Eq.\ \eqref{eq:gprime} in an alternative system of coordinates.
This will, in particular, make manifest that for the case $(\epsilon_1, \epsilon_2) = (-1, -1)$ the quaternionic K\"ahler metric obtained from Eq.\ \eqref{eq:gprime} agrees with the one-loop deformation \cite{RoblesLlana:2006ez} of the original Ferrara-Sabharwal $c$-map metric \cite{FS}. 

We use the following system of coordinates on the conical affine special $\epsilon_1$-K\"ahler manifold $M$:
\begin{equation}
\left(r=\sqrt{X^IN_{IJ}\bar{X}^J},~ \phi:=\mathrm{arg}\,X^0=-\frac{\epsilon_1i_{\epsilon_1}}{2}(\mathrm{log}\,\bar{X}^0-\mathrm{log}\,X^0),~ z^\mu=\frac{X^\mu}{X^0}\right)_{\mu=1,\ldots,n}.
\end{equation}
The inverse coordinate transformation is given by $X^I=\frac{r e^{i_{\epsilon_1}\phi}}{\sqrt{z^IN_{IJ}\bar{z}^J}}z^I=re^{\mathcal{K}/2}e^{i_{\epsilon_1}\phi}z^I$, where $z^0:=1$. In these coordinates, we have $J\xi=\partial_\phi$ and \begin{equation}
\tilde{\eta}=\frac{1}{r^2}g_M(J\xi,\cdot)=-\epsilon_1d\phi-\frac{1}{2}d^c\mathcal{K},
\end{equation}
 where $d^c=i_{\epsilon_1}(\bar{\partial}-\partial)$.
 
Now, we translate each term in Eq.\ \eqref{eq:gprime} into the set of coordinates $(\rho,\tilde{\varphi},z^\mu,\tilde{\zeta}_I,\zeta^J)$ used in \cite[Eq. (4.11)]{ACDM} (generalised to the case where $z^\mu$ may be para-holomorphic). Let us define
\begin{equation}
\rho:=r^2-c=2H-c\;,
\qquad 
\tilde{\varphi}:=-2\tilde{\phi} \;,
\qquad 
(p_a)=(2\Omega_{ab}\hat{q}^b)=(\tilde{\zeta}_I,\zeta^J) \;.
\end{equation}
In these coordinates the vector field $Z_1^P$ in Eq.\ \eqref{eqZPCMap} is given by $Z_1^P=-\epsilon_1 2\frac{\partial}{\partial\phi}-\epsilon_1\epsilon_2 2c\frac{\partial}{\partial\tilde{\phi}}$. Hence, we can choose $M':=\{\phi=0\}\subset P$ as a codimension one submanifold transversal to $Z_1^P$.

Using the fact that $\zeta^I d\tilde{\zeta}_I-\tilde{\zeta}_Id\zeta^I=-4\hat{q}^a\Omega_{ab}d\hat{q}^b$ and
\begin{equation}
d^c\mathcal{K}=-2\tilde{\eta}\big|_{M'}=\frac{2\epsilon_1}{r^2}\omega_M(\xi,\cdot)\Big|_{M'}=-\frac{2\epsilon_1}{H}q^a \Omega_{ab}dq^b\Big|_{M'},\label{eqDCK}
\end{equation}
the last term in Eq.\ \eqref{eq:gprime} is given by
$-\epsilon_1\frac{1}{4\rho^2}\frac{\rho+c}{\rho+2c}(d\tilde{\varphi}+\zeta^Id\tilde{\zeta}_I-\tilde{\zeta}_Id\zeta^I+\epsilon_1\epsilon_2\,c\,d^c\mathcal{K})^2$
after restricting to $M'$. Next, we calculate (using $dr^2=\frac{1}{4(\rho+c)}d\rho^2$, $dH=\frac{1}{2}d\rho$ and Eqs.\ \eqref{eqCASKmetricDecomp}, \eqref{eqDCK})
\begin{align}\nonumber
&\qquad\tilde{H}_{ab}dq^adq^b-\frac{2\epsilon_1}{H(2H-c)}(q^a\Omega_{ab}dq^b)^2\Bigg|_{M'}\\\nonumber
&=-\frac{1}{2H-c}g_M+\frac{2(dH)^2}{(2H-c)^2}-\frac{2\epsilon_1}{H(2H-c)}(q^a\Omega_{ab}dq^b)^2\Bigg|_{M'}\\
&=\frac{\rho+2c}{4\rho^2(\rho+c)}d\rho^2+\frac{\rho+c}{\rho}g_{\bar{M}}\;.
\end{align}
To translate the remaining terms, let us define one-forms $A_I:=d\tilde{\zeta}_I+F_{IJ}d\zeta^J$ and note that $H_{ab}d\hat{q}^ad\hat{q}^b=-\epsilon_1H^{ab}dp_a dp_b=-\epsilon_1 A_IN^{IJ}\bar{A}_J$. Also note that, similarly to \cite[Lemma 3]{ACDM}, one can prove that \begin{equation}
-A_IN^{IJ}\bar{A}_J+\frac{2}{\rho+c}(X^IA_I)(\bar{X}^I\bar{A}_I)=-\frac{\epsilon_1}{2}\hat{H}^{ab}dp_a dp_b \;,
\end{equation} where
$(\hat{H}^{ab})=\begin{pmatrix}\mathcal{I}^{-1}&\mathcal{I} ^{-1}\mathcal{R}\\\mathcal{R}\mathcal{I}^{-1}& -\epsilon_1\mathcal{I}+\mathcal{R}\mathcal{I}^{-1}\mathcal{R}\end{pmatrix}$ is defined in terms of the real matrix-valued functions $\mathcal{R}=(\mathcal{R}_{IJ})$, $\mathcal{I}=(\mathcal{I}_{IJ})$ defined by
\begin{equation}
\mathcal{N}_{IJ}:=\mathcal{R}_{IJ}+i_{\epsilon_1}\mathcal{I}_{IJ} :=\bar{F}_{IJ}-\epsilon_1i_{\epsilon_1}\frac{N_{IK}X^KX^LN_{LJ}}{X^KN_{KL}X^L} \;.
\end{equation}
Note that $\mathcal{I}$ and $\mathcal{R}$ are well-defined both on $M$ and on $\bar{M}$.
Using this information we calculate 
\begin{align}\nonumber
&\qquad -\epsilon_2\tilde{H}_{ab}d\hat{q}^ad\hat{q}^b+\epsilon_1\epsilon_2\frac{8}{(2H-c)^2}(q^a\Omega_{ab}d\hat{q}^b)^2\\\nonumber
&=\frac{\epsilon_2}{2H-c}H_{ab}d\hat{q}^ad\hat{q}^b-\frac{2\epsilon_2}{(2H-c)^2}(H_ad\hat{q}^a)^2+\frac{8\epsilon_1\epsilon_2}{(2H-c)^2}(q^a\Omega_{ab}d\hat{q}^b)^2\\\nonumber
&=-\frac{\epsilon_1\epsilon_2}{\rho}A_I(X)N^{IJ}\bar{A}_J(\bar{X})-\frac{2\epsilon_2}{\rho^2}(\mathrm{Im}\, X^IA_I)^2+\frac{2\epsilon_1\epsilon_2}{\rho^2}(\mathrm{Re}\, X^IA_I)^2\\\nonumber
&=-\frac{\epsilon_1\epsilon_2}{\rho}A_I(X)N^{IJ}\bar{A}_J(\bar{X})+\frac{2\epsilon_1\epsilon_2}{\rho^2}(X^IA_I)(\bar{X}^I\bar{A}_I)\\\nonumber
&=-\frac{\epsilon_2}{2\rho}\hat{H}^{ab}dp_adp_b+\frac{2\epsilon_1\epsilon_2\,c}{\rho^2(\rho+c)}|X^IA_I(X)|^2\\
&=-\frac{\epsilon_2}{2\rho}\hat{H}^{ab}dp_adp_b+\frac{2\epsilon_1\epsilon_2\,c}{\rho^2}e^{\mathcal{K}}|z^IA_I(z)|^2 \;.
\end{align}
Putting everything together, we find that the expression given in Eq.\ \eqref{eq:gprime} restricts to the following metric on $M'$:
\begin{align}g^c_{\varepsilon FS}=\frac{\rho+c}{\rho}g_{\bar{M}}&+\frac{1}{4\rho^2}\frac{\rho+2c}{\rho+c}d\rho^2-\epsilon_1\frac{1}{4\rho^2}\frac{\rho+c}{\rho+2c}(d\tilde{\varphi}+\sum(\zeta^Id\tilde{\zeta}_I-\tilde{\zeta}_Id\zeta^I)+\epsilon_1\epsilon_2\,c\,d^c\mathcal{K})^2\nonumber \\
&-\frac{\epsilon_2}{2\rho}\sum dp_a\hat{H}^{ab}dp_b+\frac{2\epsilon_1\epsilon_2\,c}{\rho^2}e^{\mathcal{K}}\left|\sum (z^Id\tilde{\zeta}_I+F_I(z)d\zeta^I)\right|^2,\label{DefFSmetric}
\end{align}
which is defined on the two domains $\{\rho>\max\{0,-2c\}\}$ and $\{-c<\rho<\max\{0,-2c\}\}$ in $\bar{M}\times \mathbb{R}^{2n+4}$, where $(\rho,\tilde{\varphi},\tilde{\zeta}_I,\zeta^J)$ are standard coordinates on the second factor. For $(\epsilon_1, \epsilon_2) = (-1, -1)$ this agrees with \cite[Eq. (4.11)]{ACDM} and, hence, with the one-loop deformed local $c$-map metric derived in \cite{RoblesLlana:2006ez}.


\begin{thebibliography}{99}

\bibitem[AC1]{AC1}
 D.V.\ Alekseevsky, and V.\ Cort\'es, 
{\it Classification of pseudo-Riemannian symmetric spaces of quaternionic K\"ahler type},
Amer. Math. Soc. Transl. {\bf 2} 213 (2005), 33-62.

\bibitem[AC2]{AC2} D.V.\ Alekseevsky, and V.\ Cort\'es, {\it 
The twistor spaces of a para-quaternionic K\"ahler manifold},  Osaka J.\ Math.\ {\bf 45} 
(2008), no.\ 1, 215--251.


\bibitem[ACM]{ACM} D.V.\ Alekseevsky, V.\ Cort\'es and T.\ Mohaupt, {\it 
Conification of K\"ahler and hyper-K\"ahler manifolds},  Commun.\ Math.\ Phys.\ {\bf 324} 
(2013), no.\ 2, 637--655 [arXiv:1205.2964].

\bibitem[ACDM]{ACDM}  D.V.\ Alekseevsky, V.\ Cort\'es, M.\ Dyckmanns and T.\ Mohaupt, {\it
Quaternionic K\"ahler metrics associated with special K\"ahler manifolds}, J.\ Geom.\ Phys.\ {\bf 92} (2015), 271--287 [arXiv:1305.3549].

\bibitem[APP]{APP} S.\ Alexandrov,  D.\ Persson and B.\ Pioline,
{\it  Wall-crossing, Rogers dilogarithm, and the QK/HK correspondence}, 
JHEP {\bf 1112} 027 (2011) [arXiv:1110.0466].

\bibitem[B]{B} F.\ Battaglia, {\it Circle Actions and Morse Theory on
Quaternion-K\"ahler Manifolds}, J.\ London Math.\ Soc.\ {\bf 59} (1999), no.\ 1, 345--358.   

\bibitem[CDMV]{Cortes:2015wca}
  V.~Cort\'es, P.~Dempster, T.~Mohaupt and O.~Vaughan,
  \emph{Special Geometry of Euclidean Supersymmetry IV: the local c-map},
  JHEP {\bf 1510} (2015) 066
  [arXiv:1507.04620].  

\bibitem[CM]{CM}
  V.~Cort\'es, and T.~Mohaupt,
  \emph{Special Geometry of Euclidean Supersymmetry III: the local r-map, instantons and black holes},
  JHEP {\bf 0907} (2009) 066
  [arXiv:0905.2844].
  
\bibitem[CMMS]{CMMS}
  V.~Cort\'es, C.\ Mayer, T.\ Mohaupt and F.\ Saueressig,
  \emph{Special geometry of Euclidean Supersymmetry II: hypermultiplets and the c-map},
  JHEP {\bf 0506} (2005) 025
  [arXiv:hep-th/0503094].
  
	
\bibitem[DJS]{Dancer:2004}
  A.~S.~Dancer, H.~R.~J{\o}rgensen, A.~F.~Swann,
	\emph{Metric Geometries over the Split Quaternions}, Rend.\ Sent.\ Mat.\ Univ.\ Politec.\ Torino {\bf 63} (2005), no.\ 2, 119--139 
	[arXiv:math/0412215].
  
\bibitem[D]{Dyckmanns:2015} M.\ Dyckmanns, {\it The hyper-K\"ahler/quaternionic K\"ahler correspondence and the geometry of the c-map}, PhD thesis, University of Hamburg, 2015 [\href{http://ediss.sub.uni-hamburg.de/volltexte/2015/7542}{http://ediss.sub.uni-hamburg.de/volltexte/2015/7542}].

\bibitem[FS]{FS}
  S.~Ferrara and S.~Sabharwal, {\it Quaternionic Manifolds for Type II Superstring Vacua of Calabi-Yau Spaces,}
  Nucl.\ Phys.\ B {\bf 332} (1990) 317.
   
\bibitem[GL]{GL}
K.~Galicki and H.~B.~Lawson Jr.,
{\it Quaternionic reduction and Quaternionic Orbifolds},
Math.\ Ann.\ {\bf 282} (1988), no.\ 1, 1--21.

\bibitem[Ha]{Ha} A.\ Haydys, {\it Hyper-K\"ahler and quaternionic 
K\"ahler manifolds with $S^1$-symmetries}, J.\ Geom.\ 
Phys.\ {\bf  58} (2008), no.\ 3, 293--306 [arXiv:0706.4473].

\bibitem[Hi]{Hi} N.\ Hitchin, {\it On the hyperk\"ahler/quaternion K\"ahler correspondence}, 
Commun.\ Math.\ Phys.\ \textbf{324} (2013), no.\ 1, 77��--106.


\bibitem[MS1]{MS1}
 O.~Macia, and A.~Swann,
 {\it Elementary deformations and the hyperK\"ahler-quaternionic K\"ahler correspondence}, Real and Complex Submanifolds (Daejeon, Korea, August 2014), 339--347, Y.\ J.\ Suh et al.\ (eds.), Springer Proceedings in Mathematics and Statistics 106, Springer Japan, Tokyo 2014 [arXiv:1404.1169].
 
\bibitem[MS2]{MS2}
 O.~Macia, and A.~Swann,
 {\it Twist geometry of the c-map},
 Commun.\ Math.\ Phys.\  {\bf 336} (2015), no.\ 3, 1329--1357 [arXiv:1404.0785].
 
\bibitem[M]{M}
S.\ Marchiafava, {\it Submanifolds of (para-)quaternionic
K\"ahler manifolds}, Note Mat.\ {\bf 1} (2008), no.\ 1, 295--316.

\bibitem[RSV]{RoblesLlana:2006ez}
  D.~Robles-Llana, F.~Saueressig and S.~Vandoren,
  \emph{String loop corrected hypermultiplet moduli spaces},
  JHEP {\bf 0603} (2006) 081
  [arXiv:hep-th/0602164].
  
\bibitem[S]{S} A.\ Swann, {\it HyperK\"ahler and quaternionic K\"ahler geometry}, Math. Ann. {\bf 289} (1991), no.\ 1, 421--450.
	
\bibitem[V]{V}
 S.~Vukmirovi\'c,
 {\it Para-quaternionic reduction}
 [arXiv:math/0304424].


\end{thebibliography}
\end{document}